\newtheorem{thm}{Theorem}[section] 
\newtheorem{lem}[thm]{Lemma} 
\newtheorem{cor}[thm]{Corollary} 
\theoremstyle{definition}
\newtheorem{de}[thm]{Definition}
\numberwithin{equation}{section} 
\newcommand{\ab}[1]{{\mathbf{#1}}}
\newcommand{\N}{\Bbb{ N}} 
\newcommand{\Z}{\Bbb{ Z}} 
\newcommand{\setsuchthat}{\mid} 
\newcommand{\congmod}[3]{#1 \equiv #2 \,\, \left(\mbox{\rm mod } #3\right)}
\newcommand{\UComm}[2]{\left[ {#1}, {#2} \right]}
\newcommand{\Pol}{\mathrm{Pol}}
\newcommand{\Clo}{\mathrm{Clo}}
\newcommand{\Clop}{\mathrm{Clop}}
\newcommand{\Hoc}{\mathrm{Hoc}}
\newcommand{\Con}{\mathrm{Con}}
\newcommand{\lcover}{\prec}
\newcommand{\conginfix}[1]{\equiv_{#1}}
\newcommand{\algop}[2]{( {#1}, {#2} )}
\newcommand{\R}{\mathbb{R}}
\renewcommand{\emptyset}{\varnothing}
\newcommand{\KX}{K[x_i \mid i \in \N]}
\newcommand{\Kxn}[1]{K[x_1, \ldots, x_{#1}]}
\title[Nilpotent algebras of prime power order]{Bounding the free spectrum of
  nilpotent algebras of prime power order}
\author{Erhard Aichinger}
\address{Erhard Aichinger,
Institut f\"ur Algebra,
Johannes Kepler Universit\"at Linz,
4040 Linz,
Austria}
\email{\tt erhard@algebra.uni-linz.ac.at}
\subjclass[2010]{08A40 (08B20,20N05)}
\urladdr{http://www.jku.at/algebra}
\thanks{Supported by the Austrian Science Fund (FWF):P29931.}
\keywords{nilpotent algebra, free spectrum, supernilpotent algebra, congruence modular variety}
\date{\today}
\begin{document}
\bibliographystyle{amsalpha}
\begin{abstract}
  Let $\ab{A}$ be a finite nilpotent algebra in a congruence modular variety
  with finitely many fundamental operations. If $\ab{A}$ is of
  prime power order, then it is known that
  there is a polynomial $p$ such that for every $n \in \N$,
  every $n$-generated algebra in the variety generated by $\ab{A}$ has
  at most $2^{p(n)}$ elements.
  We present a bound on the degree of this polynomial.  
\end{abstract}

\maketitle
\section{Introduction} \label{sec:intro}

The binary commutator operation defined by \cite{Sm:MV} and studied
in \cite{FM:CTFC, MMT:ALVV} has allowed to generalize concepts from
group theory, such as solvability or nilpotency, from groups to
arbitrary universal algebras. For an algebra $\ab{A}$ 
in a congruence modular variety, its \emph{lower central series} is a series
of its congruence relations, and it is defined 
by $\lambda_1 := 1_A$ and $\lambda_{k+1} := [1_A, \lambda_k]$ for $k \in \N$, where
$[. \, , \, .]$ denotes the \emph{term condition commutator} defined in \cite{FM:CTFC, MMT:ALVV}.
If $\lambda_{k+1} = 0_A$, then $\ab{A}$ is called \emph{$k$-nilpotent}.
From \cite{Hi:TOOR}, we know that for a $k$-nilpotent group
$\ab{G}$, there is a polynomial $p$ of degree $k$ such that
for all $n \in \N$, 
all $n$-generated groups in the variety generated by $\ab{G}$ are
of size at most $2^{p(n)}$. This property can be investigated
for arbitrary algebraic structures, and we say that a finite
algebra $\ab{A}$ has \emph{small free spectrum} if
there is a polynomial $p$ such that for all $n \in \N$,
every $n$-generated algebra in the variety generated by $\ab{A}$ is
of size at most $2^{p(n)}$.
Straightforward generalizations of the group theoretic
results fail:
In \cite[p.\ 308, Example 2]{VL:NIPV} Vaughan-Lee constructed a nilpotent loop of size  $12$,
and \cite[p.\ 283]{AM:PCOG} exhibits a nilpotent expansion of  the six element abelian group with one
unary operation, which both fail to have small free spectrum.
However, in a congruence modular variety, the following result is known:
\begin{thm}[{\cite[Theorem~2]{BB:FSON}}]   \label{thm:fund}
  Let $\ab{A}$ be a finite nilpotent
  algebra of finite type in a congruence modular variety.
  We assume that $\ab{A}$ is a direct product of algebras of prime power order.
  Then $\ab{A}$ has small free spectrum.
\end{thm}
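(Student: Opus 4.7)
The plan is to reduce the statement to the case that $\ab{A}$ itself has prime power order, and then to bound the free spectrum by exploiting the stronger property of \emph{supernilpotence}---the vanishing of a sufficiently high higher-arity commutator---that such algebras enjoy in a congruence modular variety.

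First I would handle the product reduction. Write $\ab{A} = \ab{A}_1 \times \cdots \times \ab{A}_r$ with each $|A_i|$ a prime power. Since every $n$-generated algebra in $V(\ab{A})$ is a quotient of the free algebra $\ab{F}_{V(\ab{A})}(n)$, it suffices to bound the cardinality of the latter. The variety $V(\ab{A})$ is contained in the varietal join $V(\ab{A}_1) \join \cdots \join V(\ab{A}_r)$, and a standard argument shows that $\ab{F}_{V(\ab{A})}(n)$ embeds into $\prod_{i=1}^r \ab{F}_{V(\ab{A}_i)}(n)$ via the map sending each free generator to the tuple of its images in the factors. Polynomial bounds $p_i$ on the factors therefore yield the polynomial bound $p_1 + \cdots + p_r$ for $\ab{A}$, reducing the problem to the prime-power-order case.

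Second, assume $\ab{A}$ has prime power order. I would invoke the structural fact, provable from the commutator calculus of \cite{FM:CTFC, MMT:ALVV}, that every finite nilpotent algebra of prime power order in a CM variety is supernilpotent: there exists a constant $c \in \N$ with $[1_A, \ldots, 1_A]_{c+1} = 0_A$, where $[\cdot, \ldots, \cdot]_{c+1}$ denotes the $(c+1)$-ary higher commutator. Supernilpotence is a Mal'cev-type property and is inherited by the whole variety $V(\ab{A})$. From this one argues that every $n$-ary term operation $t$ on an algebra $\ab{B} \in V(\ab{A})$ is determined, once a ``zero'' element $0 \in B$ is fixed, by its values on the argument tuples $(a_1, \ldots, a_n)$ in which at most $c$ coordinates differ from $0$. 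The number of such tuples is $O(n^c)$, so there are at most $|B|^{O(n^c)}$ essentially different $n$-ary term operations; applying this to the free algebra and iterating gives $|F_{V(\ab{A})}(n)| \le 2^{p(n)}$ for a polynomial $p$ of degree depending on $c$.

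The main obstacle I expect is the interpolation step in the last argument: translating the vanishing of a higher commutator into a concrete statement that terms are recoverable from their values on $c$-sparse tuples requires an inductive construction using a Mal'cev term, together with careful bookkeeping to ensure that all constants depend only on $\ab{A}$ and not on the particular $n$-generated algebra under consideration. Once the interpolation is in hand, the counting is straightforward, and combining it with the product reduction completes the proof.
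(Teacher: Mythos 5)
Your outer skeleton --- reduce to the prime power case, establish supernilpotency there, then count term operations via a Higman-type interpolation argument --- matches the architecture of the result (your product reduction via embedding the free algebra into a product of free algebras is fine, and so is the counting step, which is exactly the implication from supernilpotency to small free spectrum in Lemma~\ref{lem:snp}, i.e.\ the Berman--Blok generalization of Higman's argument). The genuine gap is your second step: you simply assert, as a citable ``structural fact provable from the commutator calculus of \cite{FM:CTFC, MMT:ALVV}'', that every finite nilpotent algebra of prime power order in a congruence modular variety is supernilpotent. That is not an auxiliary fact; it is the entire nontrivial content of the theorem. Since, for a finite algebra in a congruence modular variety, supernilpotency is \emph{equivalent} to having small free spectrum (Lemma~\ref{lem:snp}), invoking it without proof makes the argument circular: what must actually be supplied is a bound on the rank of nontrivial commutator terms, equivalently on the essential arity of absorbing polynomial functions, and neither \cite{MMT:ALVV} nor generic ``commutator calculus'' yields this without substantial further work.

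The classical source of that bound is \cite{VL:NIPV} and Chapter~14 of \cite{FM:CTFC}, which is what \cite{BB:FSON} relies on; the present paper instead proves it directly and with an explicit bound, and this is where all its work lies: one expands $\ab{A}$ by an abelian group operation so that nilpotency is preserved (Theorem~\ref{thm:expand}), notes that for prime power order this group is elementary abelian (Lemma~\ref{lem:maxchain}), realizes the fundamental operations as polynomials over a field structure on $A$, and then uses the homovariate decomposition machinery (Theorem~\ref{thm:LClo}) to show that absorbing polynomial functions have essential arity at most $\big(m(q-1)\big)^{h-1}$ (Theorem~\ref{thm:boundabs}), from which supernilpotency follows by Lemma~\ref{lem:snpeg} and Lemma~\ref{lem:reduct}. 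Your proposal contains none of this (nor even a precise citation to the commutator-term rank bound), and your expectation that the interpolation/counting step is the main obstacle is misplaced: that step is standard and is handled in the paper by citation, whereas the supernilpotency of nilpotent algebras of prime power order is the step your proof leaves unproven.
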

If $\ab{A}$ is a group, this is known from \cite{Hi:TOOR}. The proof of the above theorem
relies on a generalization of Higman's combinatorial argument
given in  \cite{BB:FSON}  and on bounding the rank of 
the commutator terms of $\ab{A}$. Such a bound was derived
in \cite{VL:NIPV} and
Chapter~14 of  \cite{FM:CTFC} in the course of proving that
an  algebra satisfying the assumptions of Theorem~\ref{thm:fund}
has a finite
basis for its equational laws.
In other words, the above theorem by Berman and Blok tells that
for each algebra $\ab{A}$ satisfying the assumptions of Theorem~\ref{thm:fund},
there is a polynomial $p$ such that every $n$-generated algebra
in the variety generated by $\ab{A}$ has at most $p(n)$ elements.
The contribution of the present work is an upper bound on the
degree of $p$. In deriving this upper bound, we obtain an
alternative proof of Theorem~\ref{thm:fund}.
We observe that for a finite algebra $\ab{A}$, every $n$-generated
algebra in the variety generated by $\ab{A}$ is a homomorphic
image of the free algebra in this variety, and this free algebra
is isomorphic to the algebra $\Clo_n (\ab{A})$ of $n$-ary term
functions on $\ab{A}$, and the  \emph{free spectrum} $f_{\ab{A}}$
of $\ab{A}$ is defined by $f_{\ab{A}} (n) := |\Clo_n (\ab{A})|$.
We also mention that Theorem~3.14 from \cite{Ke:CMVW} provides
some kind of a converse: a finite algebra in a congruence modular
variety with small free spectrum is a direct product of algebras of prime
power order.

The property of having  small free spectrum  is closely
related to \emph{supernilpotency}, a notion introduced in \cite{AE:EKCN,
AM:SAOH}.
We say that an algebra $\ab{A}$  is \emph{$k$-supernilpotent}
if the higher commutator operation defined in \cite{Bu:OTNO}
and studied, e.g., in \cite{AM:SAOH,Mo:HCT}
satisfies $[1_A, \ldots, 1_A]_{\ab{A}} = 0_A$ ($k+1$ repetitions of $1_A$);
this
condition is formulated without using higher commutators in
Definition~\ref{de:tc} below. The algebra
$\ab{A}$ is called \emph{supernilpotent} if there is $k \in \N$ such that
$\ab{A}$ is $k$-supernilpotent.
For those classes of algebra that we will study here, supernilpotency
implies nilpotency: this implication holds in congruence
permutable varieties by \cite{AM:SAOH}, and more generally
in congruence modular varieties by \cite{Wi:OSA}.
The connection between \emph{supernilpotency} and \emph{small free spectrum}
is stated in Lemma~\ref{lem:snp} below. From this Lemma,
we see that
a finite algebra $\ab{A}$ in a congruence modular variety
is $k$-supernilpotent if and only
if there is a polynomial $p$ of degree $k$ such that
for its free spectrum, we have  $f_{\ab{A}} (n) \le 2^{p(n)}$ for
all $n \in \N$;
hence $\ab{A}$ is supernilpotent if and only if
$\ab{A}$ has small free spectrum.
Using the concept of supernilpotency,
the theorem by Berman and Blok can be rephrased
as ``every nilpotent algebra
of finite type and prime power order in a congruence modular
variety is supernilpotent''. However, although \cite{BB:FSON}
yields the existence of a $k$ such that the algebra
is $k$-supernilpotent, no explicit
upper bound for $k$  has been computed.
For groups and rings, $k$ can be chosen to be the
nilpotency degree, but this does not hold in
general: for every  $k, m \in \N$ with $m \ge 2$,
\cite{AM:OVCO} exhibits
a $k$-nilpotent algebra of size $2^k$ with fundamental operations
of arity at most $m$ that is $m^{k-1}$-supernilpotent, but
not $(m^{k-1}-1)$-supernilpotent.
These examples  show that a bound on the supernilpotency degree cannot
be a function of $k$ alone, but must contain more information
on the algebra.
For certain algebras (groups expanded with multilinear
operations), an explicit bound was given in  \cite{AM:OVCO}.
Our main theorem provides such a bound for all
algebras covered by the Berman-Blok-Theorem; in particular,
it applies to nilpotent loops of prime power order.
One ingredient used in this bound is the \emph{height} of the
congruence lattice of $\ab{A}$, which we define as
the maximal size of a linearly ordered subset of the lattice
minus one; hence the 
height of the $1$-element lattice is $0$ and the height
of a linearly ordered set with $n$ elements is $n-1$.
\begin{thm} \label{thm:fund2}
  Let $q>1$ be a prime power, let $m \in \N$, and let $\ab{A}$ be
  a nilpotent algebra in a congruence modular variety
  with $|A| = q$ such that all fundamental
  operations of $\ab{A}$ are of arity at most $m$.
  Let $h$ be the height of the congruence lattice of $\ab{A}$, and
  let \[
         s := \big( m (q - 1) \big)^{h-1}.
         \]
         Then $\ab{A}$ is $s$-supernilpotent, and there is
         a polynomial $p \in \R[x]$ of degree at most $s$ such that
         the free spectrum satisfies $f_{\ab{A}} (n) = 2^{p(n)}$ for
         all $n \in \N$. 
\end{thm}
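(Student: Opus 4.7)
The plan is to prove $s$-supernilpotency of $\ab{A}$ by induction on $h = \mathrm{height}(\Con(\ab{A}))$ and then deduce the free spectrum statement from Lemma~\ref{lem:snp}, supplemented by the structural fact (standard in this setting) that $f_{\ab A}(n) = p^{g(n)}$ for an integer-valued polynomial $g$, so that $\log_2 f_{\ab A}$ is an honest element of $\R[x]$.

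For the base case $h = 1$ the algebra $\ab{A}$ is simple, so nilpotency forces $[1_A, 1_A] = 0_A$ (otherwise $[1_A,1_A] = 1_A$ and the lower central series never reaches $0_A$); hence $\ab{A}$ is abelian and thus $1$-supernilpotent, matching $s = (m(q-1))^0 = 1$. For the inductive step, I would pick a minimal congruence $\mu$ of $\ab{A}$. Prepending $0_A \prec \mu$ to any chain in $[\mu, 1_A]$ shows that $\Con(\ab{A}/\mu)$ has height at most $h-1$, and the quotient retains the remaining hypotheses (with order dividing $q$); the induction hypothesis therefore supplies some $t \le (m(q-1))^{h-2}$ for which $\ab{A}/\mu$ is $t$-supernilpotent.

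The induction is then closed by the following lifting lemma, which I expect to be the main obstacle: \emph{if $\mu$ is a minimal (necessarily abelian) congruence of $\ab{A}$ and $\ab{A}/\mu$ is $t$-supernilpotent, then $\ab{A}$ itself is $m(q-1)\,t$-supernilpotent.} One application of this lemma raises the bound $t \le (m(q-1))^{h-2}$ to $s$-supernilpotency, completing the induction; Lemma~\ref{lem:snp} then delivers the free spectrum bound.

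To attack the lifting lemma, I would use the higher commutator characterisation of supernilpotency from \cite{Bu:OTNO, AM:SAOH, Mo:HCT} together with the module structure on the blocks of the abelian congruence $\mu$ provided by \cite{FM:CTFC}. The idea is to put any term function on $\ab{A}$ into a normal form consisting of a ``quotient part,'' controlled modulo $\mu$ by the $t$-supernilpotency of $\ab{A}/\mu$ (and hence carrying at most $t$ essential coordinates in the higher commutator sense), plus a ``module correction'' valued in $\mu$-blocks, which is a polynomial over the ring acting on $\mu$ of degree at most $q-1$ in each argument (since each variable assumes at most $q$ values). Combining this per-variable degree bound with the arity bound $m$ on fundamental operations, I would substitute such normal forms into a hypothetical nonzero $(m(q-1)t+1)$-st higher commutator of $1_A$ and carefully tally essential coordinates; the bookkeeping should force the expression to vanish, which is exactly the desired $m(q-1)t$-supernilpotency.
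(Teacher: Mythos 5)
Your inductive skeleton (quotient by a minimal congruence $\mu$, which is indeed central by nilpotency, and whose quotient has congruence lattice of height at most $h-1$) is fine, and the base case $h=1$ is correct. But the entire weight of the argument rests on the ``lifting lemma'' ($\ab{A}/\mu$ $t$-supernilpotent $\Rightarrow$ $\ab{A}$ $m(q-1)t$-supernilpotent), and that lemma is not proved: the sketch hand-waves exactly the hard part. Two concrete problems. First, the claim that the ``module correction'' is ``a polynomial over the ring acting on $\mu$ of degree at most $q-1$ in each argument'' is not well-defined: the correction is a function of variables ranging over $A$, not over a module, so speaking of its per-variable degree presupposes a global coordinatization of $A$ over which every relevant function is a polynomial of per-variable degree $\le q-1$ --- essentially a field structure $\GF(q)$ on $A$ that is compatible with the nilpotent structure of $\ab{A}$. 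Constructing such a structure is nontrivial (it is the content of Lemma~\ref{lem:maxchain} and Theorem~\ref{thm:expand} in the paper, proved by induction along a central series using the relational description of centrality), and it is precisely where the prime-power hypothesis enters. Your sketch never isolates where prime-power order is used, yet the lifting lemma is false without it: for the $2$-nilpotent order-$6$ algebra of \cite{AM:PCOG} cited in the introduction, the quotient by its central congruence is abelian, hence $1$-supernilpotent, so your lemma would make the algebra $10$-supernilpotent, whereas it is not supernilpotent at all. So ``each variable assumes at most $q$ values'' cannot by itself justify the degree bound.

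Second, even granting a normal form for a single term, the quantitative conclusion --- that substituting normal forms into a higher commutator and ``tallying essential coordinates'' forces vanishing beyond $m(q-1)t$ arguments --- is exactly the step that needs a real argument, because terms are built by iterated composition and the corrections feed back into the arguments of other operations. In the paper this is handled by Theorem~\ref{thm:LClo} (rewriting the clone generated by the operations and $+$ as sums of compositions of homovariate polynomials of total degree $\le m(q-1)$) together with the induction in the proof of Theorem~\ref{thm:boundabs} (compositions containing more than $(m(q-1))^{l-1}$ variables take values in the $l$-th term of the lower central series), and the multiplicative factor per step is tied to that lower central series of the expanded algebra, not to an arbitrary minimal congruence of $\ab{A}$; the abstract per-step lifting statement you propose is not established anywhere and would itself require this machinery (or a substitute) to prove. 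As it stands, the proposal reduces the theorem to an unproved claim whose difficulty is essentially that of the theorem itself, so there is a genuine gap.
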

From this result, we obtain the following improvement of Theorem~\ref{thm:fund}.
\begin{cor} \label{cor:cmv} 
  Let $\ab{A}$ be a finite nilpotent algebra 
  in a congruence modular
  variety that is a direct product of algebras of prime power order,
  and let $m \in \N$ be such that
   such that all fundamental
operations of $\ab{A}$ are of arity at most $m$. We assume $|A|> 1$.
Let
\[
s := \big(m (|A| - 1)\big)^{(\log_2 (|A|)-1)}.
\]
Then
$\ab{A}$ is $s$-supernilpotent
and 
there is a polynomial $p \in \R[x]$ of degree $\le s$ such that the free spectrum
  satisfies
  $f_{\ab{A}} (n)= 2^{p(n)}$ for all $n \in \N$.
\end{cor}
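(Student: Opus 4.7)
The plan is to reduce the corollary to Theorem~\ref{thm:fund2} applied to each direct factor of $\ab{A}$. First, I decompose $\ab{A} = \ab{A}_1 \times \cdots \times \ab{A}_r$ with each $\ab{A}_i$ of prime power order $q_i > 1$; every $\ab{A}_i$ is a homomorphic image of $\ab{A}$, hence nilpotent, lies in the same congruence modular variety, and has fundamental operations of arity at most $m$.

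The next step is to bound the height $h_i$ of $\Con(\ab{A}_i)$. Because $\ab{A}_i$ is a finite nilpotent algebra in a congruence modular variety, every prime interval of $\Con(\ab{A}_i)$ is abelian, corresponding to a nontrivial module over some residue field, and hence contributes a factor of at least $2$ to $q_i$ along a maximal chain. Therefore $2^{h_i} \le q_i$, giving $h_i \le \log_2 q_i \le \log_2 |A|$. Plugging this into Theorem~\ref{thm:fund2} applied to $\ab{A}_i$ produces
\[
  s_i := \bigl(m(q_i - 1)\bigr)^{h_i - 1} \le \bigl(m(|A| - 1)\bigr)^{\log_2(|A|) - 1} = s,
\]
so each factor is $s$-supernilpotent with free spectrum $f_{\ab{A}_i}(n) = 2^{p_i(n)}$ for some polynomial $p_i$ of degree at most $s$.

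Finally, higher commutators of a direct product are computed coordinatewise, so $\ab{A}$ is itself $s$-supernilpotent. Moreover, any $n$-ary term function on $\ab{A}$ is determined by its tuple of projections onto the factors, which yields
\[
  f_{\ab{A}}(n) \le \prod_{i=1}^r f_{\ab{A}_i}(n) = 2^{p_1(n) + \cdots + p_r(n)},
\]
an upper bound of the form $2^{p(n)}$ with a polynomial $p$ of degree at most $s$, matching the form required by the corollary via Lemma~\ref{lem:snp}. The main obstacle is establishing the height bound $h_i \le \log_2 q_i$, which requires the structural fact from commutator theory that every prime interval in the congruence lattice of a finite nilpotent CM algebra is abelian and of size at least $2$; once that is in place, the remainder is straightforward assembly.
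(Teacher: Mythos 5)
Your proposal is correct and follows essentially the same route as the paper's own proof: decompose $\ab{A}$ into factors of prime power order, apply Theorem~\ref{thm:fund2} to each factor together with the height bound $h_i \le \log_2 q_i$, observe $s_i \le s$, and reassemble (the paper gets $s$-supernilpotency of the product from the quasi-identity formulation rather than coordinatewise higher commutators, and, like you, gets the exact form $f_{\ab{A}}(n)=2^{p(n)}$ from Lemma~\ref{lem:snp}). The one step you should tighten is the justification of $h_i \le \log_2 q_i$: abelianness of the prime quotients alone does not yield the doubling along a maximal chain, because an abelian minimal congruence of a Mal'cev algebra can have singleton classes, so the factors of $2$ need not accumulate multiplicatively. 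What makes the argument work is that finite nilpotent algebras in congruence modular varieties are congruence uniform (Corollary~7.5 of \cite{FM:CTFC}) --- equivalently, by nilpotency every minimal congruence of every quotient is central, so all of its classes are cosets of a single nontrivial module --- and this is precisely the fact the paper cites; with it, each covering pair at least doubles the class size and $2^{h_i} \le q_i$ follows.
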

Combining this with \cite{Ke:CMVW}, we obtain:
\begin{cor} \label{cor:dec}
   Let $\ab{A}$ be a finite algebra 
   in a congruence modular
   variety with $|A| > 1$,
   and let $m \in \N$ be such that
   such that all fundamental
   operations of $\ab{A}$ are of arity at most $m$. Then we have:
   \begin{enumerate}
     \item \label{it:dec1}
     If $\ab{A}$ has small free spectrum, then there
     is a polynomial $p \in \R[x]$ of degree at most
     $(m (|A|-1))^{(\log_2 (|A|)-1)}$ such that $f_{\ab{A}} (n) = 2^{p(n)}$ for all
     $n \in \N$.
   \item \label{it:dec2}
     If $\ab{A}$ is supernilpotent, then it is
     $\big((m (|A|-1))^{(\log_2 (|A|)-1)}\big)$-supernilpotent.
   \end{enumerate}
\end{cor}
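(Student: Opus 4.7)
\emph{Proof plan.} The plan is to reduce both parts of Corollary~\ref{cor:dec} to Corollary~\ref{cor:cmv} by invoking two results already recalled in the introduction: the equivalence ``small free spectrum $\iff$ supernilpotent'' provided by Lemma~\ref{lem:snp} (for finite algebras in congruence modular varieties), and Theorem~3.14 of \cite{Ke:CMVW}, stating that a finite algebra in a congruence modular variety with small free spectrum is a direct product of algebras of prime power order.

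For item~(\ref{it:dec1}), I would assume that $\ab{A}$ has small free spectrum. Theorem~3.14 of \cite{Ke:CMVW} then yields that $\ab{A}$ is (isomorphic to) a direct product of algebras of prime power order. Moreover, Lemma~\ref{lem:snp} gives that $\ab{A}$ is supernilpotent, and by \cite{Wi:OSA} supernilpotency implies nilpotency in a congruence modular variety, so $\ab{A}$ is nilpotent. Thus the hypotheses of Corollary~\ref{cor:cmv} are met, and its conclusion produces the desired polynomial $p \in \R[x]$ of degree at most $(m(|A|-1))^{(\log_2(|A|)-1)}$ with $f_{\ab{A}}(n) = 2^{p(n)}$ for all $n \in \N$.

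For item~(\ref{it:dec2}), I would assume that $\ab{A}$ is supernilpotent. Lemma~\ref{lem:snp} then gives that $\ab{A}$ has small free spectrum, and exactly the same chain of reasoning as above places $\ab{A}$ within the scope of Corollary~\ref{cor:cmv}; its supernilpotency assertion then yields the claimed bound on the supernilpotency degree.

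There is no genuine obstacle here: both parts are short bookkeeping arguments once Corollary~\ref{cor:cmv} is in hand. The only points requiring a brief check are that the arity bound $m$ transfers unchanged from $\ab{A}$ to Corollary~\ref{cor:cmv}, and that the exponent $(m(|A|-1))^{(\log_2(|A|)-1)}$ in the conclusion of Corollary~\ref{cor:cmv} matches the one claimed in Corollary~\ref{cor:dec}, which it does by construction.
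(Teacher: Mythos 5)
Your proposal is correct and follows essentially the same route as the paper: both parts reduce to Corollary~\ref{cor:cmv} via Lemma~\ref{lem:snp} together with \cite[Theorem~3.14]{Ke:CMVW}. The only cosmetic difference is that you obtain nilpotency from supernilpotency via \cite{Wi:OSA}, whereas the paper reads it off directly from the implication \eqref{it:s4}$\Rightarrow$\eqref{it:s2} of Lemma~\ref{lem:snp}; both are valid.
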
   
The proofs of these results will be given in Section~\ref{sec:proofs}.     
Our proof of Theorem~\ref{thm:fund2} will proceed as follows:
We define a binary operation $+$ on  $\ab{A} = \algop{A}{F}$ such that
$\algop{A}{+}$ is an elementary abelian group and
$\ab{A'} = \algop{A}{F \cup \{+\}}$ is still nilpotent.
Since $\algop{A}{+}$ is elementary abelian, we can expand
it to a finite field $\algop{A}{+, \cdot}$ and represent
all fundamental operations from $\ab{A}$ by polynomials over
this field. Using this representation, we show that
$\ab{A'}$ is $s$-supernilpotent, which implies
that its reduct $\ab{A}$ is also $s$-supernilpotent.

\section{Preliminaries about supernilpotency} \label{sec:prelsnp}

We use the definition of supernilpotency  in
\cite[Definition~7.1]{AM:SAOH}.
This definition can be stated as follows:
\begin{de}[Term condition for supernilpotency] \label{de:tc}
  Let $\ab{A}$ be an algebra and $k \in \N$. Then $\ab{A}$ is
  \emph{$k$-supernilpotent} if for all $n_1, \ldots, n_{k+1} \in \N_0$ and
  for all $\langle (a^{(i)}_1, a^{(i)}_2) \mid i \in \{1,\ldots, k + 1\} \rangle
  \in \prod_{i=1}^{k+1} (A^{n_i} \times A^{n_i})$ and for all
  $\sum_{i=1}^{k+1} n_i$-ary term functions $t$ of $\ab{A}$ the following holds:
  if for all $f : \{1,\ldots, k\} \to \{1,2\}$ such that
  $f$ is not constantly $2$, we have
  \[
  t( a_{f(1)}^{(1)}, \ldots, a_{f(k)}^{(k)}, a_{1}^{(k+1)}) =
  t( a_{f(1)}^{(1)}, \ldots, a_{f(k)}^{(k)}, a_{2}^{(k+1)}),
  \]
  then
  \[
  t (a_{2}^{(1)}, \ldots, a_{2}^{(k)},  a_{1}^{(k+1)})
  =
      t (a_{2}^{(1)}, \ldots, a_{2}^{(k)},  a_{2}^{(k+1)}).
      \]
\end{de}
From this definition, we see immediately that reducts of
supernilpotent algebras are supernilpotent:
\begin{lem} \label{lem:reduct}
  Let $s \in \N$, and
  let $\ab{A}, \ab{B}$ be universal algebras with the same
  universe. If $\ab{B}$ is $s$-supernilpotent
  and the clones of term operations of these
  algebras satisfy $\Clo (\ab{A}) \subseteq \Clo (\ab{B})$,
  then $\ab{A}$ is also $s$-supernilpotent.
\end{lem}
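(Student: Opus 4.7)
The plan is to unwind Definition~\ref{de:tc} and observe that the term condition for $s$-supernilpotency is formulated entirely in terms of the set of term functions of the algebra, with no reference to the specific choice of fundamental operations. Fix natural numbers $n_1, \ldots, n_{s+1}$, pairs of tuples $(a_1^{(i)}, a_2^{(i)}) \in A^{n_i} \times A^{n_i}$ for $i \in \{1, \ldots, s+1\}$, and a $\big(\sum_{i=1}^{s+1} n_i\big)$-ary term function $t$ of $\ab{A}$ satisfying the hypothesis of Definition~\ref{de:tc}. I would first observe that $t$ is also a term function of $\ab{B}$; this is immediate from $\Clo(\ab{A}) \subseteq \Clo(\ab{B})$.

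Because both the hypothesis and the conclusion of the term condition depend only on values of $t$ at tuples drawn from the common universe $A$, I can then apply Definition~\ref{de:tc} to the $s$-supernilpotent algebra $\ab{B}$, with the term function $t$ and the same chosen data, to conclude
\[
t(a_2^{(1)}, \ldots, a_2^{(s)}, a_1^{(s+1)}) = t(a_2^{(1)}, \ldots, a_2^{(s)}, a_2^{(s+1)}).
\]
This is exactly the conclusion demanded for $\ab{A}$.

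There is no real obstacle here; the lemma is essentially a remark whose proof reduces to a one-line observation. The only point that deserves any care is noticing that Definition~\ref{de:tc} quantifies over arbitrary term functions, and hence the property automatically transfers along any inclusion of clones of term functions — one does not need to match fundamental operations between $\ab{A}$ and $\ab{B}$.
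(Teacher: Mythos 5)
Your argument is correct and coincides with the paper's reasoning: the paper states this lemma as an immediate consequence of Definition~\ref{de:tc} (with no written proof), precisely because the term condition quantifies over term functions and therefore transfers along the clone inclusion $\Clo(\ab{A}) \subseteq \Clo(\ab{B})$, which is exactly the observation you make.
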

We also see that $s$-supernilpotency is defined by an infinite
set of quasi-identities, and is therefore preserved under
taking subalgebras and direct products.

If $\ab{A} = \algop{A}{+,-,0, (f_i)_{i \in I}}$ is an expanded group,
we can describe supernilpotency more easily. For $n \in \N$,
we call
a function $f : A^n \to A$ \emph{absorbing}
if for all $a_1, \ldots, a_n \in A$ with
$0 \in \{a_1, \ldots, a_n \}$, we have
$f (a_1, \ldots, a_n) = 0$. The prototypes of absorbing functions
are the commutator $(a_1, a_2) \mapsto -a_1 - a_2 + a_1 + a_2$ in
any group,
$(a_1, a_2) \mapsto a_1a_2$ in any ring, and, also on every ring,
every function that can be written as $(a_1, \ldots, a_n) \mapsto
a_1a_2\cdots a_n \cdot g(a_1, \ldots, a_n)$.
The \emph{essential arity} of $f : A^n \to A$ is the number of arguments
on which $f$ depends.
We note that the essential arity of an absorbing function $f : A^n \to A$
is either $n$ or $0$.

\begin{lem} \label{lem:snpeg}
  Let $\ab{A} = \algop{A}{+,-,0, (f_i)_{i \in I}}$ be an expanded group, and
  let $s \in \N$.
  Then the following are equivalent:
  \begin{enumerate}
  \item \label{it:s1}  $\ab{A}$ is $s$-supernilpotent.
  \item \label{it:s2} All absorbing polynomial functions of $\ab{A}$ are of
     essential arity at most $s$.
  \end{enumerate}
  If $\ab{A}$ is finite, then \eqref{it:s1} and \eqref{it:s2} are furthermore equivalent to
  \begin{enumerate}
    \setcounter{enumi}{2}
  \item \label{it:s3} There is a polynomial $p \in \R[x]$ of degree $\le s$ such
    that $f_{\ab{A}} (n) = |\Clo_n (\ab{A})| =
    2^{p(n)}$ for all $n \in \N$.
  \end{enumerate}   
\end{lem}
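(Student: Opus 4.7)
The plan is to establish (1)$\Leftrightarrow$(2) in general and then (2)$\Leftrightarrow$(3) under finiteness. Both equivalences rest on the \emph{M\"obius decomposition} of polynomial functions on an expanded group: for any $f : A^n \to A$ and any $S \subseteq \{1, \ldots, n\}$, setting
\[
g_S(\vec{x}) := \sum_{T \subseteq S} (-1)^{|S|-|T|} f(\vec{x}^T),
\]
where $\vec{x}^T$ is $\vec{x}$ with all coordinates outside $T$ replaced by $0$, yields a function that is absorbing in the variables indexed by $S$, depends only on those variables, and satisfies $f = \sum_S g_S$; this is the unique such decomposition. (In the non-abelian case the sums must be taken in a fixed order, but the formal properties carry over.)

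For (1)$\Rightarrow$(2) I would argue contrapositively: given an absorbing polynomial $p(\vec{x}) = t(\vec{x}, \vec{c})$ of essential arity $n \ge s+1$ with $p(\vec{b}) \neq 0$, apply Definition~\ref{de:tc} with $k = s$, block sizes $n_i = 1$ for $i \le s$ and $n_{s+1} = n-s+m$, and values $a_1^{(i)} = 0$, $a_2^{(i)} = b_i$ for $i \le s$, $a_1^{(s+1)} = (0, \ldots, 0, \vec{c})$, $a_2^{(s+1)} = (b_{s+1}, \ldots, b_n, \vec{c})$. For any $f$ not constantly $2$ some $a_{f(i)}^{(i)} = 0$, so absorption makes both sides of the hypothesis vanish, while the conclusion would force $0 = p(b_1, \ldots, b_s, 0, \ldots, 0) = p(\vec{b}) \neq 0$, a contradiction. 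For (2)$\Rightarrow$(1) I would suppose the term condition fails for some term $t$ and tuples $\vec{a}_j^{(i)}$. After the substitution $\vec{x}^{(i)} \mapsto \vec{a}_1^{(i)} + \vec{x}^{(i)}$ (turning $t$ into a polynomial, which is harmless since Definition~\ref{de:tc} can equivalently be stated for polynomial functions by folding constants into the last block), I assume $\vec{a}_1^{(i)} = \vec{0}$. Writing $\vec{d}_i := \vec{a}_2^{(i)}$, let $\phi(\vec{x}) := t(\vec{x}) - t(\vec{x}^{(1)}, \ldots, \vec{x}^{(s)}, \vec{0})$ and
\[
D(\vec{x}^{(1)}, \ldots, \vec{x}^{(s+1)}) := \sum_{T \subseteq \{1,\ldots,s\}} (-1)^{s-|T|}\, \phi\bigl(\chi_T(1)\vec{x}^{(1)}, \ldots, \chi_T(s)\vec{x}^{(s)}, \vec{x}^{(s+1)}\bigr).
\]
Pairing $T$ with $T \triangle \{j\}$ (for $j \le s$) and using $\phi(\,\cdot\,, \vec{0}) = 0$ (for $j = s+1$) shows $D$ vanishes whenever any block is set to $\vec{0}$, while the hypothesis of the term condition kills every summand except the $T = \{1, \ldots, s\}$ one at $(\vec{d}_1, \ldots, \vec{d}_{s+1})$, giving $D(\vec{d}_1, \ldots, \vec{d}_{s+1}) = \phi(\vec{d}_1, \ldots, \vec{d}_{s+1}) \neq 0$. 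M\"obius-decomposing $D$ over its $\sum n_i$ scalar variables, the block-absorbing property together with uniqueness forces $g_S = 0$ unless $S$ meets each of the $s+1$ blocks, so a surviving $g_S$ is a nonzero absorbing polynomial of essential arity $|S| \ge s+1$, contradicting (2).

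For finite $\ab{A}$, the same M\"obius decomposition applied to term functions yields a bijection $\Clo_n(\ab{A}) \cong \prod_{S \subseteq \{1,\ldots,n\}} \mathrm{Abs}_{|S|}(\ab{A})$, since the $g_S$ of a term function is again a term ($0$ is a nullary term). Writing $B_k := |\mathrm{Abs}_k(\ab{A})|$, this gives $|\Clo_n(\ab{A})| = \prod_{k=0}^n B_k^{\binom{n}{k}}$, so $\log_2 f_{\ab{A}}(n) = \sum_k \binom{n}{k} \log_2 B_k$ is automatically a polynomial in $n$ with degree $\max\{k : B_k > 1\}$. Condition~(2) forces $B_k = 1$ for $k > s$, yielding (3); the converse extracts $B_k = 1$ for $k > s$ from (3) by linear independence of the $\binom{n}{k}$ as polynomials in $n$.

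The main obstacle will be the final step of (3)$\Rightarrow$(2): upgrading the vanishing of absorbing \emph{term} functions of arity $>s$ to that of absorbing \emph{polynomial} functions of arity $>s$. I would handle this by M\"obius-decomposing the term $t$ of $p(\vec{x}) = t(\vec{x}, \vec{c})$ over all $n+m$ of its variables as $t = \sum_S g_S$ (each $g_S$ an absorbing term) and specializing at $\vec{c}$; a computation like the one in (2)$\Rightarrow$(1) shows that the absorption of $p$ in $\vec{x}$ kills every contribution except from $S \supseteq \{1, \ldots, n\}$, and every such $S$ has $|S| \ge n > s$ with $g_S = 0$ by hypothesis, forcing $p = 0$.
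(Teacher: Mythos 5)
Your proposal is correct in substance, but it takes a genuinely different route from the paper: the paper does not prove Lemma~\ref{lem:snpeg} from first principles at all. It obtains the equivalence of \eqref{it:s1} and \eqref{it:s2} by identifying $s$-supernilpotency with the higher-commutator condition $[1_A,\ldots,1_A]=0_A$ and citing Corollary~6.12 of \cite{AM:SAOH}, and the equivalence with \eqref{it:s3} by citing Corollary~4.3 of \cite{Ai:OTDD}, which is itself a Higman-style counting argument. What you do is reconstruct self-contained proofs of both cited results: your \eqref{it:s1}$\Rightarrow$\eqref{it:s2} is the standard specialization of Definition~\ref{de:tc} to an absorbing polynomial (the same manoeuvre the paper uses inside Lemma~\ref{lem:snp} to bound ranks of commutator terms), your \eqref{it:s2}$\Rightarrow$\eqref{it:s1} isolates the full-support component of a failed term-condition instance, and your product formula $|\Clo_n(\ab{A})|=\prod_k B_k^{\binom{n}{k}}$ is exactly Higman's argument, including the correct passage from absorbing term functions to absorbing polynomial functions by specializing the constants inside the decomposition. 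Your route buys self-containedness (and makes visible why $0$ being a fundamental nullary operation matters, so that the components of a term function are again term functions); the paper's route buys brevity and inherits the Mal'cev-level generality of the cited sources.

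One point needs repair before your argument is airtight in the stated generality, namely for arbitrary, possibly non-abelian, expanded groups: the displayed M\"obius formula for $g_S$, the alternating-sum definition of $D$, and the cancellation obtained by pairing $T$ with $T\mathbin{\triangle}\{j\}$ all literally use commutativity of $+$, since they rearrange non-adjacent summands. Your parenthetical remark that fixing an order suffices is true, but you should say how: define the components recursively, $g_S(\vec{x}) := -\bigl(\sum_{T\subsetneq S} g_T(\vec{x})\bigr)+f(\vec{x}^S)$ with the sum taken in a fixed order refining inclusion, and define $D$ as an iterated difference $\Delta_1\cdots\Delta_s\,\phi$ where $\Delta_j\psi := -\psi|_{\text{block }j=\vec{0}}+\psi$. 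With these definitions one checks directly that the decomposition exists, is unique, and has absorbing components, that each $\Delta_j$ creates vanishing on the hyperplane where block $j$ is $\vec{0}$ and preserves the vanishing already achieved (including on the hyperplane where block $s+1$ is $\vec{0}$, coming from $\phi(\cdot,\vec{0})=0$), and that expanding $D$ at $(\vec{d}_1,\ldots,\vec{d}_{s+1})$ kills every term except $\phi(\vec{d}_1,\ldots,\vec{d}_{s+1})\neq 0$ by the term-condition hypothesis. After this substitution the rest of your argument, including the counting step (where the growth comparison $\binom{n}{k_0}\log_2 B_{k_0}$ versus $\deg p\le s$ is the robust form of your linear-independence remark), goes through verbatim.
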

\emph{Proof:}
The equivalence of~\eqref{it:s1} and~\eqref{it:s2} follows from
Corollary~6.12 of \cite{AM:SAOH} by observing that $s$-supernilpotency
is equivalent to the higher commutator property
$[1_A, \ldots, 1_A] = 0_A$ ($(s+1)$ times $1_A$).
The equivalence of~\eqref{it:s3} and~\eqref{it:s1}
follows from Corollary~4.3 of \cite{Ai:OTDD};
there it 
was  proved using a modification of an argument that goes
back to \cite{Hi:TOOR}. \qed

The equivalence of~\eqref{it:s1} and~\eqref{it:s3} is actually
true
for all finite algebras in congruence modular varieties.
Following \cite{FM:CTFC},
we say that a term $w (x_1, \ldots, x_{r+1})$
in the language of $\ab{A}$ is a \emph{commutator term}
of rank $r$ for $\ab{A}$ if 
 $\ab{A} \models w(z,x_2, \ldots, x_{r}, z) \approx
      w(x_1,z, \ldots, x_{r}, z) \approx \dots \approx
      w(x_1, x_2, \ldots, z, z) \approx z$.
 A commutator term $w (x_1, \ldots, x_{r+1})$ is called \emph{trivial} 
      if $\ab{A} \models w(x_1, \ldots, x_{r}, z) \approx z$.
A part of the next lemma has also been stated in~\cite{AMO:COTR}. 
\begin{lem} \label{lem:snp}
  Let $\ab{A}$ be a finite algebra in a congruence modular variety,
  and 
  let $s \in \N$.
  Then the following are equivalent:
  \begin{enumerate}
  \item \label{it:s1}  $\ab{A}$ is $s$-supernilpotent.
  \item \label{it:s2}  $\ab{A}$ is nilpotent, and
    all nontrivial commutator terms of $\ab{A}$
    are of rank at most $s$.
  \item \label{it:s3} There is a polynomial $p \in \R[x]$ of degree at most $s$ such
                       that $f_{\ab{A}} (n) = 2^{p(n)}$ for all $n \in \N$.    
  \item \label{it:s4}  $\lim_{n \to \infty} \big( \log_2 (f_{\ab{A}} (n)) / n^{s+1} \big) = 0$.
\end{enumerate}   
\end{lem}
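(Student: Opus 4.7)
\emph{Proof plan.} I would establish the equivalences by the cycle $(1)\Rightarrow(2)\Rightarrow(3)\Rightarrow(4)\Rightarrow(1)$, using the structure theory of finite nilpotent algebras in congruence modular varieties to reduce to the expanded-group case handled by Lemma~\ref{lem:snpeg}. For $(1)\Leftrightarrow(2)$, the nilpotency clause comes for free, since supernilpotency implies nilpotency in congruence modular varieties by \cite{Wi:OSA}. The identity $[1_A,\ldots,1_A]=0_A$ (with $s+1$ copies) then unpacks, via the twin-term characterization of higher commutators in \cite{AM:SAOH,Mo:HCT} (the binary case is Theorem~4.9 of \cite{FM:CTFC}), into the statement that every commutator term of rank at least $s+1$ is trivial for $\ab{A}$, which is exactly~(2).

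For $(1)\Rightarrow(3)$, I would use the standard fact that a finite nilpotent algebra in a congruence modular variety decomposes as $\ab{A}\cong\prod_i\ab{A}_i$ with each $\ab{A}_i$ of prime power order, and that each such $\ab{A}_i$ is polynomially equivalent to a finite expanded group $\ab{B}_i$. Since the higher-commutator condition in~(1) is a property of the clone of polynomial operations, $\ab{B}_i$ inherits $s$-supernilpotency; Lemma~\ref{lem:snpeg}(3) then yields $f_{\ab{B}_i}(n)=2^{p_i(n)}$ with $\deg p_i\le s$. Translating back from the polynomial clone of $\ab{B}_i$ to the term clone of $\ab{A}_i$ by naming the (finitely many) constants, and then multiplying over $i$, one obtains $f_{\ab{A}}(n)=2^{p(n)}$ with $\deg p\le s$. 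The implication $(3)\Rightarrow(4)$ is immediate since any polynomial of degree at most $s$ satisfies $p(n)=o(n^{s+1})$.

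The direction $(4)\Rightarrow(1)$ I would handle contrapositively. If $\ab{A}$ fails to be supernilpotent at all, then by Kearnes' converse \cite[Theorem~3.14]{Ke:CMVW} $\ab{A}$ does not have small free spectrum, so $\log_2 f_{\ab{A}}(n)$ is not polynomially bounded and~(4) fails. Otherwise $\ab{A}$ is $k$-supernilpotent for some minimal $k>s$, and the already established $(1)\Rightarrow(3)$, together with the minimality of $k$, forces the exponent polynomial to have degree exactly $k\ge s+1$, again contradicting~(4). The main obstacle is the passage between $\ab{A}_i$ and its polynomially equivalent expanded-group companion $\ab{B}_i$: the two algebras share their polynomial clone but not their term clone, so a bound on the free spectrum of $\ab{B}_i$ transfers to $\ab{A}_i$ only after absorbing a constant summand in the exponent (from the constants one must adjoin), and one must check that this does not inflate the degree of the exponent polynomial beyond $s$.
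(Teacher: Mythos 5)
Your reduction of (1)$\Rightarrow$(3) to Lemma~\ref{lem:snpeg} rests on two steps that are not available. First, the ``standard fact'' that a finite nilpotent algebra in a congruence modular variety is a direct product of algebras of prime power order is false as stated: Vaughan-Lee's $12$-element nilpotent loop \cite{VL:NIPV} admits no such decomposition, since otherwise Theorem~\ref{thm:fund} would give it small free spectrum. Under hypothesis (1) a correct decomposition does exist, but it needs a nontrivial theorem about supernilpotent (not merely nilpotent) algebras, e.g.\ from \cite{AM:SAOH}; appealing to \cite[Theorem~3.14]{Ke:CMVW} instead would presuppose small free spectrum, i.e.\ (3)/(4). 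Second, and more seriously, the claim that each prime-power factor is \emph{polynomially equivalent} to a finite expanded group is not a known fact; it is essentially the crux that the paper works around. Theorem~\ref{thm:expand} only produces a term \emph{expansion} of the algebra by a new, in general non-polynomial, group operation, and the point of Theorem~\ref{thm:boundabs} is that supernilpotency of that expansion must be proved directly from its nilpotency --- it cannot be transferred from the original algebra, because the expansion has a larger clone (only the converse transfer, Lemma~\ref{lem:reduct}, is automatic). So the key step of your (1)$\Rightarrow$(3) is unsupported. Even granting it, the comparison $f_{\ab{A}_i}(n)\le f_{\ab{B}_i}(n+|A_i|)$ gives only an upper bound of the form $2^{p_i(n+|A_i|)}$, whereas (3) asserts that $\log_2 f_{\ab{A}}(n)$ is \emph{exactly} a polynomial; the paper obtains this exactness from the Berman--Blok argument in the implication (2)$\Rightarrow$(3) \cite{BB:FSON}, which your plan omits.

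Two further problems. In (4)$\Rightarrow$(1), the step ``the minimality of $k$ forces the exponent polynomial to have degree exactly $k$'' implicitly uses (3)$\Rightarrow$(1) for the parameter $k-1$, i.e.\ the very equivalence being proved, so as written it is circular; what is needed is a lower bound (a nontrivial commutator term of rank $k$ forces $\log_2 f_{\ab{A}}(n)$ to grow like $n^{k}$), which is how the paper argues via the proof of \cite[Theorem~1]{BB:FSON}, after first deriving nilpotency and a Mal'cev term from \cite{HM:TSOF} and \cite{FM:CTFC} and passing to the expansion by constants, and then concluding with \cite[Lemma~7.5]{AM:SAOH}. Finally, your scheme only ever proves an implication \emph{into} (2): the claimed converse ``unpacking'' of the higher-commutator identity glosses over the gap between commutator terms (which are constant-free) and the constant tuples occurring in the term condition of Definition~\ref{de:tc}. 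Without some implication out of (2) --- the paper uses (2)$\Rightarrow$(3) --- item (2) is not shown to be equivalent to the other three.
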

\emph{Proof:}
\eqref{it:s1}$\Rightarrow$\eqref{it:s2}:
By
\cite[Theorem~4.11]{Wi:OSA}, $\ab{A}$ has an $(s+1)$-difference term~$d$.
Since $\ab{A}$ is
$s$-supernilpotent, $d$ is a Mal'cev term.
From
\cite[Lemma~7.5]{AM:SAOH}, we obtain that $\ab{A}$ is nilpotent
and all commutator terms have rank at most $s$.
This bound on the rank can also be seen directly from the term
condition that defines supernilpotency: to this end,
let $w(x_1,\ldots, x_{r+1})$ be a commutator term of $\ab{A}$ with $r > s$.
We want to show that $w$ satisfies $\ab{A} \models w(x_1, \ldots, x_r, z) \approx z$.
To this end, let $\xi_1, \ldots, \xi_r, \zeta \in A$.
We apply the term condition from Definition~\ref{de:tc} with the
following settings:
$t := w^{\ab{A}}$, 
$a_1^{(i)} := \zeta$ and $a_2^{(i)} := \xi_i$ for $i \in \{1,\ldots, s \}$,
$a_1^{(s + 1)} := (\zeta, \ldots, \zeta)$ ($r-s+1$ times $\zeta$), $a_2^{(s)} := (\xi_{s+1}, \ldots, \xi_r, \zeta)$.
Then the term condition implies
$t(\xi_1, \ldots, \xi_s, \zeta, \ldots, \zeta) =
t(\xi_1, \ldots, \xi_s, \ldots, \xi_r, \zeta)$.
Since $w$ is a commutator term, $\zeta = t(\xi_1, \ldots, \xi_s, \zeta, \ldots, \zeta)$.
Thus $\ab{A} \models w(x_1, \ldots, x_r, z) \approx z$, and hence $w$ is trivial.

\eqref{it:s2}$\Rightarrow$\eqref{it:s3}:
Under the additional assumption that $\ab{A}$ is a direct product of algebras
of prime power order, this is shown in the proof of Theorem~2 of \cite{BB:FSON}.
However, this additional assumption is only used to obtain a bound on the rank
of nontrivial commutator terms, which is claimed by~\eqref{it:s2}.

      \eqref{it:s3}$\Rightarrow$\eqref{it:s4}: Obvious.

      \eqref{it:s4}$\Rightarrow$\eqref{it:s1}:
      The proof for this implication comes from
      \cite{AMO:COTR}; it is included for easier reference.
      
  Theorem~9.18 of \cite{HM:TSOF} implies
 that the variety $V (\ab{A})$ omits types $\boldsymbol{1}$ and $\boldsymbol{5}$.
 From \cite[Lemma~12.4]{HM:TSOF}, we obtain that $\ab{A}$ is right nilpotent,
 and since the commutator operation in a congruence modular variety is commutative,
 $\ab{A}$ is therefore nilpotent. Now \cite[Theorem~6.2]{FM:CTFC} yields that 
 $\ab{A}$ has a Mal'cev term. 
 Let $\ab{A}^*$ be the expansion of $\ab{A}$ with all its constants.
 Then $\ab{A}^*$ is nilpotent and generates a congruence permutable variety.
  The variety $V (\ab{A}^*)$ is nilpotent by \cite[Theorem~14.2]{FM:CTFC}, and hence 
 congruence uniform by \cite[Corollary~7.5]{FM:CTFC}.
 Since for all $n \in \N$,   $f_{\ab{A}^*} (n) \le f_{\ab{A}} (n + |A|)  \le
 2^{p (n + |A|)}$, we obtain from the proof of \cite[Theorem~1]{BB:FSON} that
 all commutator terms of $\ab{A}^*$ are of rank
 at most $s$. Hence all commutator polynomials (in the sense of \cite[Definition~7.2]{AM:SAOH})
 of $\ab{A}$
 are of rank at most $s$, and then \cite[Lemma~7.5]{AM:SAOH} yields that $\ab{A}$ is $s$-supernilpotent.
 \qed
 
 It is worth noting that in proving~Lemma~\ref{lem:snp}, we needed to employ
 substantial results from each of the sources
 \cite{BB:FSON, FM:CTFC, HM:TSOF, AM:SAOH, Wi:OSA}.

 \section{Preliminaries on commutators and nilpotency} 
 In this section, we compile some well known facts on
 the relation between the commutator operation and
 the Mal'cev term of an algebra.
 This is an extension of \cite[p.\ 14]{Ai:TPFO2}.
 Let $\ab{A}$ be an algebra with a Mal'cev term $d$.
We fix an element $o \in A$ and define two binary operations $+_o$ and $-_o$ by
\begin{equation} \label{eq:deplus}
    \begin{array}{rcl}
        x +_o y & := & d (x,o,y) \text{ and } \\
        x -_o y & := & d (x,y,o) \text{ for } x,y \in A.
    \end{array}
\end{equation}
Sometimes, we also use $-_o$ as a unary operation: then
  $-_o \, y$ stands for $o -_o y = d(o, y, o)$.
In the following proposition, we compile those relations of   
$+_o$ and $-_o$ with the commutator that we will need in the sequel.
Such properties have been established from the very beginning
of modular commutator theory (cf. \cite{He:AAIC, Gu:GMIC}), and
the proofs of several of these properties are taken from \cite{Ai:TPFO2}.
The proofs given below rely only on the following fact that follows rather directly
from the definition of the term condition defining the binary commutator operation
(see Lemma~2.2 of \cite{Ai:TPFO2} or
Exercise~4.156(2) from \cite{MMT:ALVV}): if  $\alpha$ and $\beta$ are congruences of
any algebra $\ab{A}$,
$(a,b) \in \alpha$, $(c,d) \in \beta$, $p \in \Pol_2 (\ab{A})$, and
$p (a,c) = p (a, d)$, then $\congmod{p(b,c)}{p(b,d)}{\UComm{\alpha}{\beta}}$.
\begin{lem}[cf. {\cite[Proposition~2.7]{Ai:TPFO2}}]    \label{lem:plus1}
   Let $\ab{A}$ be an algebra with a Mal'cev term
   $d$,
   let $a,b,c,o$ be elements of $A$, let $+_o$ and
   $-_o$ be defined as in~\eqref{eq:deplus}, 
   and let $\alpha, \beta$ be congruences of 
   $\ab{A}$.
   Then we have:
   \begin{enumerate}
      \item \label{it:p1} $a +_o o = o +_o a = a -_o o = a$.
      \item \label{it:p2} $a -_o a = o$.
      \item \label{it:p3} If $a \conginfix{\alpha} b \conginfix{\beta} o$,
                          then 
      \(
          \congmod{(a -_o b) +_o b}{a}{\UComm{\alpha}{\beta}}.
      \)        
      \item \label{it:p4} If $a \conginfix{\alpha} o  \conginfix{\beta} b$,
             then
        \(
          \congmod{(a +_o b) -_o b}{a}{\UComm{\alpha}{\beta}}.
        \)
      \item \label{it:p6} If $a \conginfix{\alpha} o  \conginfix{\beta} b$,
        then $\congmod{a +_o b}{b +_o a}{[\alpha, \beta]}$.
      \item \label{it:p7} If $a \conginfix{\alpha} o \conginfix{\beta} b$, then
        $\congmod{ (a +_o b) +_o c}{a +_o (b +_o c)}{\UComm{\alpha}{\beta}}$.
      \item \label{it:p8} If $a \conginfix{\alpha} o \conginfix{\beta} b$, then
        $\congmod{d (a +_o b, b, c)}{a +_o c}{\UComm{\alpha}{\beta}}$.
      \item \label{it:p9} If $a \conginfix{\alpha} o$, then
        $\congmod{(-_o \, a) +_o a}{o}{\UComm{\alpha}{\alpha}}$.
   \end{enumerate}
\end{lem}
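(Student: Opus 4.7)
The plan is to dispatch items~(1) and~(2) by direct substitution into the Mal'cev identities $d(x,x,y)=y$ and $d(x,y,y)=x$: each of $a +_o o$, $o +_o a$, and $a -_o o$ reduces to $a$, while $a -_o a = d(a,a,o) = o$.

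For items~(3)--(9) the uniform strategy is to apply the key term-condition fact stated just before the lemma. In each case I construct a binary polynomial $p \in \Pol_2(\ab{A})$ and choose pairs $(u_1, u_2) \in \alpha$, $(v_1, v_2) \in \beta$ such that (i) the Mal'cev identities force $p(u_1, v_1) = p(u_1, v_2)$, and (ii) $p(u_2, v_1)$ and $p(u_2, v_2)$ unfold to the two sides of the desired $[\alpha, \beta]$-congruence. For~(3), take $p(x, y) := d(d(x, y, o), o, y)$ with $(u_1, u_2) := (b, a)$ and $(v_1, v_2) := (o, b)$: then $p(b, o) = b = p(b, b)$ by Mal'cev, while $p(a, o) = a$ and $p(a, b) = (a -_o b) +_o b$. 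A mirror polynomial $p(x, y) := d(d(x, o, y), y, o)$ with $(u_1, u_2) := (o, a)$, $(v_1, v_2) := (o, b)$ handles~(4); the closely related choice $p(x, y) := d(d(x, o, y), y, c)$ with the same pairs handles~(8). Item~(9) is then the specialization of~(3) to $\alpha = \beta$ with $a := o$ and $b := a$.

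The main obstacles are~(6) and~(7). For commutativity~(6), my plan is to use $p(x, y) := d(d(y, o, x), y, o)$ with $(u_1, u_2) := (o, a) \in \alpha$ and $(v_1, v_2) := (o, b) \in \beta$; Mal'cev gives $p(o, o) = o = p(o, b)$, while $p(a, o) = a$ and $p(a, b) = (b +_o a) -_o b$, yielding the auxiliary relation $(b +_o a) -_o b \equiv a \pmod{[\alpha, \beta]}$. Applying $\cdot +_o b$ on the right (valid since $[\alpha, \beta]$ is a congruence) and invoking~(3) with $a' := b +_o a$, $b' := b$ (the hypotheses $a' \equiv_\alpha b'$ and $b' \equiv_\beta o$ are immediate) to rewrite $((b +_o a) -_o b) +_o b$ as $b +_o a$ modulo $[\alpha, \beta]$ yields $a +_o b \equiv b +_o a \pmod{[\alpha, \beta]}$. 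For associativity~(7), a single term-condition application does not close, so the plan is a two-step argument: first apply~(8) with $c$ replaced by $b +_o c$ to obtain $d(a +_o b, b, b +_o c) \equiv a +_o (b +_o c) \pmod{[\alpha, \beta]}$; then apply the key fact to $q(x, y) := d(d(x, o, b), y, d(y, o, c))$ with pairs $(o, a)$ and $(o, b)$, for which $q(o, o) = q(o, b) = b +_o c$ by Mal'cev, $q(a, o) = (a +_o b) +_o c$, and $q(a, b) = d(a +_o b, b, b +_o c)$; combining the two congruences produces the desired associativity.

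In every case the whole difficulty lies in choosing the correct binary polynomial; no induction or deep structural machinery is needed, in keeping with the elementary style of the parallel proofs in \cite[Proposition~2.7]{Ai:TPFO2}.
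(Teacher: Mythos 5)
Your proposal is correct and takes essentially the same route as the paper: each item is derived from the single term-condition fact stated before the lemma by choosing a suitable binary polynomial, and all of your choices check out, including the two-step arguments for items (6) and (7), where the paper instead uses the one-shot polynomials $d(y+_o x,\, x+_o y,\, a+_o b)$ and $d\big(x+_o(y+_o c),\,(x+_o y)+_o c,\,(a+_o b)+_o c\big)$. The remaining differences (slightly different polynomials for (3), (4), (8), and invoking items (3) and (8) as stepping stones) are cosmetic and introduce no circularity, since those items are established independently.
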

\emph{Proof:} Properties~\eqref{it:p1} and~\eqref{it:p2} follow from
the properties of the Mal'cev term $d$.
For proving~\eqref{it:p3}, we define a polynomial function $t \in \Pol_2 (\ab{A})$ by
\(
    t(x,y) := d ( d (a,x,y), y, b).
\)
We have $t(a,o) = t(a,b) = b$. Thus
we obtain $\congmod{t(b,o)}{t(b,b)}{\UComm{\alpha}{\beta}}$, which means
\(
   \congmod{(a -_o b) +_o b}{a}{\UComm{\alpha}{\beta}}.
\)
For proving~\eqref{it:p4}, we define a polynomial function $t \in \Pol_2 (\ab{A})$ by
\(
   t(x,y) :=  d ( d ( x, y, b ), d (b, y, o) , o).
\) 
We have $t(o,o) = t(o, b)  = o$. 
Thus
we obtain $\congmod{t(a,o)}{t(a,b)}{\UComm{\alpha}{\beta}}$, which means
\(
   \congmod{(a +_o b) -_o b}{a}{\UComm{\alpha}{\beta}}.
\)
For proving~\eqref{it:p6}, we define
$t (x, y) := d (y +_o x, x +_o y, a +_o b)$ for $x,y \in A$.
Then we have $t (o, o) = t (o, b) = a +_o b$, and therefore
$\congmod{t (a,o)}{t(a,b)}{\UComm{\alpha}{\beta}}$, which
implies $\congmod {a +_o b}{b +_o a}{\UComm{\alpha}{\beta}}$. 
For proving~\eqref{it:p7}, we define
$t(x,y) := d \big( x +_o (y +_o c),  (x +_o y) +_o c,  (a +_o b) +_o c) \big)$
and have $t(o,o) = t (o, b) = (a +_o b) +_o c$, and therefore
$\congmod{t (a,o)}{t(a,b)}{\UComm{\alpha}{\beta}}$, which
implies $\congmod {(a +_o b) +_o c}{a +_o (b +_o c)}{\UComm{\alpha}{\beta}}$.
For proving~\eqref{it:p8}, we consider the polynomial function of $\ab{A}$ defined
                 by
                 \(
                 t (x,  y) := d (x  +_o  y,  y, c)
                 \)
                 for $x , y  \in A$. Then $t (o,o) = d (o, o, c) = c$ and
                 $t (o, b) = d(b, b, c) = c$. Therefore,
                 $(t (a, o), t (a, b)) \in \UComm{\alpha}{\beta}$, and thus
                 $\congmod{d (a, o ,c)}{d(a +_o b, b, c)}{\UComm{\alpha}{\beta}}$.
                 Since $d (a, o, c) = a +_o c$, the result follows.
                 For property~\eqref{it:p9},
                 we observe that
                 $(-_o \, a) +_o a =
                 (o -_o a) +_o a$. By property~\eqref{it:p3},
                 the last expression is congruent modulo $[\alpha,\alpha]$
                 to $o$.
           \qed
           
      The following well known Lemma goes back to \cite{He:AAIC, Fr:SIAI}.
      \begin{lem}  \label{lem:abgroup}
        Let $\ab{A}$ be an algebra with Mal'cev term $d$, and let
             $\alpha$ be a congruence of $\ab{A}$ with $[\alpha, \alpha] = 0_A$.
             Let $Q := o / \alpha$. Then $\ab{Q} := \algop{Q}{+_o, -_o, o}$ is an abelian group.
             If $\alpha$ is furthermore a minimal congruence of $\ab{A}$ and $Q$ is finite, then
             $\ab{Q}$ is of prime exponent.
           \end{lem}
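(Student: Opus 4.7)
The two claims will be handled separately.

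\emph{$\mathbf{Q}$ is an abelian group.} First I would verify that $+_o$ and $-_o$ restrict to operations on $Q$: if $a, b \in Q = o/\alpha$, then $a, o, b$ lie in the same $\alpha$-class, and since $\alpha$ is a congruence of $\ab{A}$, we have $a +_o b = d(a, o, b) \in o/\alpha = Q$ and similarly $a -_o b \in Q$. Then I would apply Lemma~\ref{lem:plus1} with $\alpha = \beta$ throughout, using the hypothesis $[\alpha, \alpha] = 0_A$ to collapse each $\UComm{\alpha}{\alpha}$-congruence to equality on $Q$: item~(1) gives that $o$ is a two-sided identity for $+_o$, item~(9) yields inverses, item~(6) gives commutativity, and item~(7) gives associativity.

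\emph{Prime exponent when $\alpha$ is minimal and $Q$ is finite.} Under these hypotheses $\mathbf{Q}$ is a nontrivial finite abelian group. Fix a prime $p$ dividing $|Q|$ and set
\[
   pQ := \{\underbrace{q +_o \cdots +_o q}_{p \text{ copies of } q} : q \in Q\},
\]
which is a subgroup of $\mathbf{Q}$. I would define
\[
   \beta_p := \{(a,b) \in \alpha : d(a,b,o) \in pQ\}
\]
(well-defined, since $a \equiv_\alpha b$ forces $d(a,b,o) \equiv_\alpha d(b,b,o) = o$, so $d(a,b,o) \in Q$) and show that $\beta_p$ is a congruence of $\ab{A}$. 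Then $\beta_p \le \alpha$ with $o/\beta_p = pQ$, so $\beta_p = 0_A$ iff $pQ = \{o\}$, while $\beta_p = \alpha$ iff $pQ = Q$. Minimality forces $\beta_p \in \{0_A, \alpha\}$; the case $\beta_p = \alpha$ would make multiplication-by-$p$ on $\mathbf{Q}$ surjective and hence, by finiteness, bijective, contradicting $p \mid |Q|$. Thus $\mathbf{Q}$ has exponent $p$, and since this must hold for every prime dividing $|Q|$, only one such prime exists, so $\mathbf{Q}$ has prime exponent.

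\emph{Main obstacle.} The nontrivial step is verifying that $\beta_p$ is a congruence. Symmetry and transitivity reduce to the identity $d(a,c,o) = d(a,b,o) +_o d(b,c,o)$ for $\alpha$-related $a,b,c$, which I would derive by the same two-variable polynomial manipulation used throughout Lemma~\ref{lem:plus1}, exploiting $[\alpha, \alpha] = 0_A$ to convert the relevant $\UComm{\alpha}{\alpha}$-congruences into equalities. Compatibility with the fundamental operations of $\ab{A}$ rests on the standard fact that, since $\alpha$ is abelian, every polynomial of $\ab{A}$ restricts to an affine map between $\alpha$-blocks, and such maps carry cosets of $pQ$ to cosets of $pQ$. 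This affine structure of abelian congruences is the core technical content of the classical references \cite{He:AAIC, Fr:SIAI} cited above.
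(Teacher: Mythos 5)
Your proof of the first claim is essentially the paper's: Lemma~\ref{lem:plus1} with $\alpha=\beta$ and $\UComm{\alpha}{\alpha}=0_A$ (you correctly also invoke the commutativity item, which is indeed needed). For the second claim you depart from the paper, which makes $Q$ into a module over the ring of restrictions to $Q$ of the unary polynomials fixing $o$ and then quotes the structure of finite simple modules; your route, via the relation $\beta_p$, has a genuine gap at the step ``$\beta_p$ is a congruence''. The equivalence-relation part is fine: the cocycle identity $d(a,c,o)=d(a,b,o)+_o d(b,c,o)$ for $\alpha$-related $a,b,c$ does follow from $\UComm{\alpha}{\alpha}=0_A$ by one application of the term condition (take $p(x,y):=d\bigl(d(a,x,o),o,d(x,y,o)\bigr)$, note $p(a,b)=p(b,b)=d(a,b,o)$, and move the first argument from $b$ to $c$). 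The problem is compatibility with the operations of $\ab{A}$. The ``standard fact'' you invoke --- polynomials act affinely on $\alpha$-blocks --- is true, but it is a statement \emph{internal} to blocks: it says $g(d(x,y,z))=d(g(x),g(y),g(z))$ when $x,y,z$ lie in one $\alpha$-class. Your $\beta_p$ is defined through the \emph{cross-block} quantity $d(x,y,o)$, with $x,y$ in an arbitrary $\alpha$-class and $o$ in another, and the block-internal affine structure does not force a unary polynomial to respect it; in particular the map $x\mapsto d(x,b,o)$ from $b/\alpha$ to $Q$ need not be injective or related in any controlled way to $x\mapsto d(g(x),g(b),o)$.

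In fact, under the hypotheses you use for that step ($\UComm{\alpha}{\alpha}=0_A$ alone, minimality not being invoked there), the claim fails. Let $A=\{0,1,u_0,u_1\}$, let $\alpha$ have the classes $Q=\{0,1\}$ and $B=\{u_0,u_1\}$, and let $o=0$. Define $d$ as follows: if all three arguments lie in one class, use the affine operation $x-y+z$ of $\Z_2$ (identifying $u_i$ with $i$); otherwise return the unique argument lying in the class prescribed for the output (the output class being determined by the two-element quotient). Add one unary operation $g$ with $g(0)=0$, $g(1)=1$, $g(u_0)=0$, $g(u_1)=1$. Every polynomial of $\algop{A}{d,g}$, restricted to a product of $\alpha$-classes, is affine over $\Z_2$, so $\UComm{\alpha}{\alpha}=0_A$; yet $d(u_1,u_0,0)=0$ while $d(g(u_1),g(u_0),0)=1$, so for $p=2$ (where $pQ=\{0\}$) your $\beta_p$ contains $(u_1,u_0)$ but not $(g(u_1),g(u_0))$ and is not a congruence. (Here $\alpha$ is not minimal, so the lemma itself is untouched; but your compatibility argument nowhere uses minimality, and it would have to --- or, better, be replaced by an argument that stays inside the block $Q$, as in the paper: the zero-fixing unary polynomials restrict to endomorphisms of $\algop{Q}{+_o,-_o,o}$, minimality makes $Q$ a finite simple module over the resulting finite ring, and such a module has prime exponent.)
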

           \emph{Proof:}
           The first part follows from items \eqref{it:p7},\eqref{it:p1}, and \eqref{it:p9} of
           Lemma~\ref{lem:plus1}. For the second part, we sketch an argument taken from
           \cite[p.\ 151]{Fr:SIAI}:
           From Proposition~2.8(2) of \cite{Ai:TPFO2}, it is not hard to infer that the group $\ab{Q}$ can be
           seen as a
           module over the finite ring
           $\algop{ \{ p|_Q : p \in \Pol_1 (\ab{A}), p (o) = o\}}{+_o, \circ}$. Since $\alpha$ is a minimal congruence,
           this module has no submodules, and thus $\ab{Q}$ is the additive group of a finite simple module, and has
           therefore prime exponent. \qed

           We will also use the following relational description of centrality that goes back to
           \cite{Ki:TROT}. We call a congruence relation $\zeta$ of $\ab{A}$ \emph{central} in
           $\ab{A}$ if $[\zeta, 1_A] = 0_A$ (cf. \cite[Definition~13.1]{BS:ACIU}).
           \begin{lem}[Relational description of centrality,
               {cf.\  Theorem~3.2(iii) of \cite{Ki:TROT}}]
                \label{lem:kiss}
             Let $\ab{A}$ be an algebra with a Mal'cev term $d$, and let $\zeta \in \Con (\ab{A})$.
             Then $\zeta$ is central in $\ab{A}$ if and only if all fundamental operations of $\ab{A}$
             preserve the relation $\rho = \{ (a_1,a_2,a_3,a_4) \in A^4 \mid (a_1,a_2) \in \zeta, \,
             d (a_1, a_2, a_3) = a_4 \}$.
            \end{lem}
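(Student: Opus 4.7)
The plan is to prove both implications, using the term condition that defines $[\zeta, 1_A] = 0_A$ together with the Mal'cev identities $d(x,y,y) = x$ and $d(x,x,y) = y$.

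For the forward direction, assume $[\zeta, 1_A] = 0_A$ and let $f$ be an $n$-ary fundamental operation. Given tuples $(a_1^{(i)}, a_2^{(i)}, a_3^{(i)}, a_4^{(i)}) \in \rho$ for $i = 1, \ldots, n$, write $r_j := f(a_j^{(1)}, \ldots, a_j^{(n)})$ for $j \in \{1,2,3,4\}$.  Since $\zeta$ is a congruence, $(r_1, r_2) \in \zeta$ is immediate, so the substance is the identity $d(r_1, r_2, r_3) = r_4$. I would apply the term condition to the polynomial
\[
P(x_1, \ldots, x_n, z_1, \ldots, z_n) := d\bigl(\, f\bigl(d(x_1, a_2^{(1)}, z_1), \ldots, d(x_n, a_2^{(n)}, z_n)\bigr),\ f(z_1, \ldots, z_n),\ r_3 \,\bigr),
\]
with the $x_i$ as $\zeta$-variables and the $z_i$ as free variables. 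Using $d(x,x,y) = y$, substituting $x_i := a_2^{(i)}$ collapses $P$ to the constant value $r_3$ in $\vec{z}$, so $P(\vec{a_2}, \vec{a_2}) = P(\vec{a_2}, \vec{a_3})$. Since $(a_1^{(i)}, a_2^{(i)}) \in \zeta$, the term condition yields $P(\vec{a_1}, \vec{a_2}) = P(\vec{a_1}, \vec{a_3})$; two applications of $d(x,y,y) = x$ together with $a_4^{(i)} = d(a_1^{(i)}, a_2^{(i)}, a_3^{(i)})$ then identify $P(\vec{a_1}, \vec{a_2})$ with $d(r_1, r_2, r_3)$ and $P(\vec{a_1}, \vec{a_3})$ with $r_4$, closing the forward direction.

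For the converse, assume $\rho$ is preserved by all fundamental operations; since every constant $c \in A$ yields $(c,c,c,c) \in \rho$, a routine induction extends this preservation to every polynomial of $\ab{A}$. To verify $[\zeta, 1_A] = 0_A$, take a polynomial $p(\vec{x}, \vec{y})$, pairs $(a_i, b_i) \in \zeta$, and tuples $\vec{c}, \vec{c'}$ with $p(\vec{a}, \vec{c}) = p(\vec{a}, \vec{c'})$. For each $x_i$ substitute the $\rho$-tuple $(b_i, a_i, a_i, b_i)$, whose membership is certified by $d(b_i, a_i, a_i) = b_i$; for each $y_j$ substitute $(c_j, c_j, c_j', c_j')$, certified by $d(c_j, c_j, c_j') = c_j'$. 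Then $\rho$-preservation delivers $\bigl(p(\vec{b}, \vec{c}),\, p(\vec{a}, \vec{c}),\, p(\vec{a}, \vec{c'}),\, p(\vec{b}, \vec{c'})\bigr) \in \rho$, which unfolds to $d\bigl(p(\vec{b}, \vec{c}), p(\vec{a}, \vec{c}), p(\vec{a}, \vec{c'})\bigr) = p(\vec{b}, \vec{c'})$. The hypothesis collapses the last two arguments of $d$, and $d(x,y,y) = x$ gives $p(\vec{b}, \vec{c}) = p(\vec{b}, \vec{c'})$.

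The hard part will be identifying the polynomial $P$ for the forward direction. It must become a constant function of $\vec{z}$ when $\vec{x} = (a_2^{(1)}, \ldots, a_2^{(n)})$, yet simultaneously yield both $d(r_1, r_2, r_3)$ and $r_4$ at $\vec{x} = (a_1^{(1)}, \ldots, a_1^{(n)})$ under two suitable values of $\vec{z}$; the specific $P$ above achieves this through careful placement of the Mal'cev argument $d(x_i, a_2^{(i)}, z_i)$ so that both collapses happen via $d(x,x,y) = y$ and $d(x,y,y) = x$. The backward direction, by contrast, reduces to spotting the $\rho$-tuple $(b_i, a_i, a_i, b_i)$ that encodes the entire term-condition transition in a single application of $\rho$-preservation.
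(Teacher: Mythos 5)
Your proof is correct. Both directions check out: in the forward direction, your polynomial
\[
P(x_1,\ldots,x_n,z_1,\ldots,z_n) = d\bigl(f\bigl(d(x_1,a_2^{(1)},z_1),\ldots,d(x_n,a_2^{(n)},z_n)\bigr),\, f(z_1,\ldots,z_n),\, r_3\bigr)
\]
does exactly what you claim -- it is constant in $\vec{z}$ at $\vec{x}=\vec{a_2}$ by $d(x,x,y)=y$, it evaluates to $d(r_1,r_2,r_3)$ at $(\vec{a_1},\vec{a_2})$ and to $r_4$ at $(\vec{a_1},\vec{a_3})$, so the term condition $C(\zeta,1_A;0_A)$ (which is equivalent to $[\zeta,1_A]=0_A$, since the algebra is Mal'cev and hence congruence modular) forces $d(r_1,r_2,r_3)=r_4$; and in the converse, the quadruples $(b_i,a_i,a_i,b_i)$ and $(c_j,c_j,c_j',c_j')$ together with one application of $\rho$-preservation by the polynomial $p$ (legitimate because $\rho$ is reflexive, so constants preserve it) recover the term condition. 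Your route differs from the paper's, which gives no direct argument at all: there the lemma is obtained as a special case of Proposition~2.3 and Lemma~2.4 of the cited paper of Aichinger and Mayr, where centralization $[\alpha,\beta]\le\delta$ in Mal'cev algebras is characterized in general by preservation of similar four-variable relations. Your version buys a short, self-contained proof using only the Mal'cev identities and the term-condition definition of the commutator, at the cost of redoing (in the special case $\beta=1_A$, $\delta=0_A$) work that the reference already covers in greater generality; the paper's citation buys brevity and the more general statement, which, however, is not needed elsewhere in this paper.
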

           \emph{Proof:} The result is a special case of \cite[Proposition~2.3 and Lemma~2.4]{AM:PCOG}. \qed

           In expanded groups, the commutator of two congruences can be calculated
           from the associated $0$-classes (ideals) and binary polynomial functions
           \cite[Lemma~2.9]{AM:PCOG}.
           We will only use the following assertion:
           \begin{lem} \label{lem:cideals}
             Let $\ab{A} = \algop{A}{+,-,0, (f_i)_{i \in \N}}$ be an expanded group,
            let $\xi, \eta$ be congruences of $\ab{A}$,
            let $X := 0/ \xi$ and $Y := 0/\eta$ be the ideals of $\ab{A}$ associated
            with these congruences, and let $p \in \Pol_2 (\ab{A})$ such that
            $p(a,0) = p(0,a) = 0$ for all $a \in A$.
            Then for all $x \in X$ and $y \in Y$, $\congmod{p(x,y)}{0}{[\xi, \eta]}$,
            and therefore $p(x,y)$ lies in the ideal $[X,Y] := 0/[\xi, \eta]$ associated with $[\xi, \eta]$.
          \end{lem}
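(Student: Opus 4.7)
The plan is to invoke directly the basic characterization of the binary term condition commutator that the author highlighted just before Lemma~\ref{lem:plus1}: if $\alpha, \beta$ are congruences of $\ab{A}$, $(a,b)\in\alpha$, $(c,d)\in\beta$, $p\in\Pol_2(\ab{A})$ and $p(a,c)=p(a,d)$, then $\congmod{p(b,c)}{p(b,d)}{\UComm{\alpha}{\beta}}$. The hypotheses on $p$ (namely $p(a,0)=p(0,a)=0$) are set up precisely to make this characterization fire with the ``anchor'' values taken at $0$.

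Concretely, fix $x\in X$ and $y\in Y$. Since $X=0/\xi$ and $Y=0/\eta$, we have $(0,x)\in\xi$ and $(0,y)\in\eta$. Set $\alpha:=\xi$, $\beta:=\eta$, $a:=0$, $b:=x$, $c:=0$, $d:=y$. Using the absorption assumption on $p$, I compute $p(a,c)=p(0,0)=0$ and $p(a,d)=p(0,y)=0$, so $p(a,c)=p(a,d)$. The cited fact then yields
\[
\congmod{p(x,0)}{p(x,y)}{\UComm{\xi}{\eta}}.
\]
Since $p(x,0)=0$ by the other absorption identity, this rewrites as $\congmod{p(x,y)}{0}{\UComm{\xi}{\eta}}$, which is the first assertion. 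The second assertion is then immediate: by definition $[X,Y]=0/\UComm{\xi}{\eta}$, so $p(x,y)\in[X,Y]$.

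There is no real obstacle here; the lemma is essentially a direct corollary of the term-condition characterization of the commutator, specialized to the expanded-group setting where $0$ plays the role of the common anchor point, and the only thing to verify is that the two absorption identities $p(x,0)=0$ and $p(0,y)=0$ are exactly what is needed to activate the general principle. No further appeal to Lemma~\ref{lem:plus1} or to the Mal'cev term is required, since in an expanded group the group operations already play the role usually taken by $+_o$ and $-_o$ in the general modular setting.
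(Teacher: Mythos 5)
Your proof is correct and is essentially identical to the paper's: both apply the quoted term-condition fact with $a=0$, $b=x$, $c=0$, $d=y$, using $p(0,0)=p(0,y)=0$ to obtain $\congmod{p(x,0)}{p(x,y)}{[\xi,\eta]}$ and then $p(x,0)=0$ to conclude. The paper's proof is merely a one-line compression of exactly this argument.
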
   
          \emph{Proof:}  
          Since $p(0,0) = p (0,y)$, the term condition yields
          $\congmod{p(x,0)}{p(x,y)}{[\xi, \eta]}$. \qed
          
\section{Expanding an algebra with a group operation}
Let $\ab{A}$ be an algebra in a congruence modular
variety, let $m \in \N_0$,  and let $0_A = \alpha_0 \le  \alpha_1 \le \dots \le \alpha_m = 1_A$
be a linearly ordered sequence of equivalence relations on $A$.
$L = \langle \alpha_i \mid i \in \{0,1,\ldots,m\} \rangle$ is a \emph{central series} of $\ab{A}$
if for each $i \in \{1,\ldots, m\}$, $\alpha_i$ is a congruence relation of $\ab{A}$ and
$\alpha_i/\alpha_{i-1}$ is
central in $\ab{A}/\alpha_{i-1}$; using the homomorphism property of
the modular commutator,
this centrality can be expressed by $[1_A, \alpha_i] \le \alpha_{i-1}$.
An algebra is nilpotent if and only if it has a finite central series.
We fix an element $o \in A$ and a Mal'cev term $d$ of $\ab{A}$. 
For each $i \in \{1, \ldots, m\}$, we let $G_i \subseteq \ab{A}/\alpha_{i-1}$
be defined by
\[
G_i := \{ x/ \alpha_{i-1} \mid x \in A, (x, o) \in \alpha_i \}.
\]
In other words, $G_i$ is the image of $o/\alpha_i$ under the canonical
projection from $\ab{A}$ to $\ab{A}/\alpha_{i-1}$. Let $\bar{o} :=
o/\alpha_{i-1}$.
Since $[\alpha_i, \alpha_i] \le [1, \alpha_{i}] \le  \alpha_{i-1}$, Lemma~\ref{lem:abgroup}
and the homomorphism property of the modular commutator tell that the operations
$g +^i h := d(g,\bar{o},h)$ and $-^i \, g := d(\bar{o},g,\bar{o})$ turn $G_i$ into an abelian group.
We call \[ \ab{G} := \prod_{i=1}^m \algop{G_i}{+^i,-^i,\bar{o}} \] \emph{the abelian group associated
  with the algebra $\ab{A}$, its central ceries $L$, and zero $o$.}
\begin{lem} \label{lem:maxchain}
  Let $\ab{A}$ be a finite nilpotent algebra in a congruence modular variety,
  let $0_A = \alpha_0 \lcover \alpha_1 \lcover \cdots \lcover \alpha_m = 1_A$
  be a maximal chain in the congruence lattice of $\ab{A}$, and let $o \in A$.
  Let $\ab{G} = \prod_{i=1}^m \ab{G}_i$ be  the abelian group associated with $\ab{A}$, $L := \langle \alpha_i \mid
  i \in \{0,1,\ldots, m\} \rangle$,   and zero $o$.
  Then $|G|$ is a direct product of groups of prime order, and
  $|\ab{G}| = |\ab{A}|$. If $|\ab{A}|$ is furthermore of prime power order,
  then $\ab{G}$ is elementary abelian.
\end{lem}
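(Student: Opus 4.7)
The plan is to verify the three assertions in order. First I would check that the given chain $L$ is a central series of $\ab{A}$ and that each $\ab{G}_i$ is a finite elementary abelian $p_i$-group for some prime $p_i$. Since $\ab{A}$ is nilpotent, every nonzero congruence $\beta$ of $\ab{A}$ satisfies $[1_A,\beta]<\beta$ (iterating $[1_A,\,\cdot\,]$ along $\beta$ eventually reaches $0_A$), and the same property is inherited by every quotient of $\ab{A}$. Applied to the atom $\alpha_i/\alpha_{i-1}$ of $\Con(\ab{A}/\alpha_{i-1})$, this forces $[1,\alpha_i/\alpha_{i-1}]=0$, so $[1_A,\alpha_i]\le\alpha_{i-1}$, and in particular $[\alpha_i,\alpha_i]\le\alpha_{i-1}$. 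Thus in $\ab{A}/\alpha_{i-1}$ the minimal congruence $\alpha_i/\alpha_{i-1}$ is abelian, and Lemma~\ref{lem:abgroup}, applied in this quotient (which inherits a Mal'cev term from $\ab{A}$), shows that $\ab{G}_i$ is an abelian group of prime exponent $p_i$, i.e.\ an elementary abelian $p_i$-group. Since every such group is a direct product of copies of $\Z/p_i\Z$, the product $\ab{G}=\prod_{i=1}^m\ab{G}_i$ is a direct product of groups of prime order.

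Next I would compute its size. Any algebra in a nilpotent congruence modular variety is congruence uniform: by \cite[Theorem~14.2]{FM:CTFC} the variety generated by $\ab{A}$ is nilpotent, and then \cite[Corollary~7.5]{FM:CTFC} gives congruence uniformity, a property inherited by every quotient $\ab{A}/\alpha_{i-1}$. Hence all $\alpha_{i-1}$-classes share the common size $|o/\alpha_{i-1}|$. Since $G_i$ is the image of $o/\alpha_i$ under the canonical projection onto $\ab{A}/\alpha_{i-1}$ and the fibres of this restricted projection are precisely the $\alpha_{i-1}$-classes contained in $o/\alpha_i$, one obtains $|G_i|=|o/\alpha_i|/|o/\alpha_{i-1}|$. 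A telescoping product then gives
\[
|\ab{G}|=\prod_{i=1}^m|G_i|=\frac{|o/\alpha_m|}{|o/\alpha_0|}=\frac{|A|}{1}=|\ab{A}|.
\]

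Finally, if $|\ab{A}|=p^k$ for a prime $p$, then $\prod_{i=1}^m|G_i|=p^k$ forces each $|G_i|$ to be a power of $p$; combined with the fact that $\ab{G}_i$ has prime exponent $p_i$, this yields $p_i=p$ for every $i$. Hence every $\ab{G}_i$, and therefore their direct product $\ab{G}$, is elementary abelian of exponent $p$. The main obstacle is the cardinality identity in the second paragraph, which rests on congruence uniformity in nilpotent congruence modular varieties; the remaining parts follow fairly directly from Lemma~\ref{lem:abgroup} once one has observed that maximality of the chain makes each $\alpha_i/\alpha_{i-1}$ a minimal abelian congruence of $\ab{A}/\alpha_{i-1}$.
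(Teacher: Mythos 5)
Your proof is correct and takes essentially the same route as the paper's: Lemma~\ref{lem:abgroup} applied in the quotients $\ab{A}/\alpha_{i-1}$ gives prime exponent for each $\ab{G}_i$, congruence uniformity from \cite[Corollary~7.5]{FM:CTFC} yields the cardinality count (you telescope $|G_i|=|o/\alpha_i|/|o/\alpha_{i-1}|$ where the paper inducts on $m$), and the prime-power case is handled identically. Your extra paragraph justifying that the maximal chain is a central series merely spells out what the paper asserts without detail, so no substantive difference.
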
  
\emph{Proof:}
Since $\ab{A}$ is nilpotent,
      $L$ is a central series of $\ab{A}$.
By Lemma~\ref{lem:abgroup}, each of the groups $\ab{G}_i$ is an abelian group of prime
exponent. Furthermore, $\ab{A}$ is congruence uniform \cite[Corollary~7.5]{FM:CTFC}.
For proving $|\ab{G}| = |\ab{A}|$, we proceed by induction on
$m$ and have $|\ab{A}| = |o/\alpha_1| \cdot |\ab{A}/\alpha_1| =
|\ab{G}_1| \cdot |\ab{A}/\alpha_1|$, which is equal to 
to $|\ab{G}_1| \cdot  \prod_{i=2}^m |\ab{G}_i| = |\ab{G}|$ by the induction hypothesis.
Now assume that $\ab{A}$ is of prime power order. Then
$\ab{G}$ is of prime power order and squarefree exponent, and therefore
elementary abelian.
\qed

The following theorem allows to expand a nilpotent algebra with a Mal'cev term
with group operations such that nilpotency is preserved.
\begin{thm} \label{thm:expand}
  Let $\ab{A} = \algop{A}{F}$ be a
  nilpotent algebra with
  a Mal'cev term $d$, let $m \in \N$, let $o \in A$,
  and
  let $L = \langle \alpha_i \mid i \in \{0,\ldots, m\} \rangle$
  be a central series of $\ab{A}$. 
  Then there exist a binary function $+: A\times A \to A$ and a unary function
  $-: A \to A$ such that 
  \begin{enumerate}
    \item \label{it:t1} $\algop{A}{+,-,o}$ is isomorphic to the abelian
      group $\ab{G}$ associated with $\ab{A}$, $L$, and $o$.
    \item \label{it:t2} $L = \langle \alpha_i \mid i \in \{0,\ldots, m\} \rangle$
      is a central series also of the expansion $\ab{A'} = \algop{A}{F \cup \{+, -,o\}}$ of $\ab{A}$,
      and therefore $\ab{A'}$ is nilpotent of class at most $m$.
  \end{enumerate}     
  \end{thm}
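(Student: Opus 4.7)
I plan to prove Theorem~\ref{thm:expand} by induction on the length $m$ of the central series $L$. In the base case $m = 1$, the central-series condition forces $[1_A, 1_A] = 0_A$, so Lemma~\ref{lem:abgroup} applied with $\alpha = 1_A$ shows that $\algop{A}{+_o,-_o,o}$ is an abelian group, which coincides with $\ab{G}_1 = \ab{G}$. Setting $+ := +_o$ and $- := -_o$ settles this case: both operations are polynomials of $\ab{A}$ built from $d$ and the constant $o$, and in a congruence modular variety commutators are determined by the polynomial clone after adjoining constants, so the expansion leaves every commutator unchanged and $L$ remains central.

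For the inductive step with $m \ge 2$, I would apply the hypothesis to $\ab{A}/\alpha_1$, whose central series $\langle \alpha_i/\alpha_1 \mid i \in \{1,\dots,m\}\rangle$ has length $m-1$. This yields operations $\oplus, \ominus$ on $A/\alpha_1$ making it an abelian group isomorphic to $\ab{G}_2 \times \cdots \times \ab{G}_m$ and keeping the shortened series central. Since $\alpha_1$ is central in $\ab{A}$, Lemma~\ref{lem:abgroup} provides an abelian group $(o/\alpha_1,+_o,-_o,o) \cong \ab{G}_1$. I would choose a transversal $\sigma\colon A/\alpha_1 \to A$ for $\alpha_1$ with $\sigma(\bar o)=o$, write $\bar a := a/\alpha_1$ and $c(a) := a -_o \sigma(\bar a) \in o/\alpha_1$, and set
\[
a + b := \sigma(\bar a \oplus \bar b) +_o \bigl(c(a) +_o c(b)\bigr), \qquad -a := \sigma(\ominus \bar a) +_o (-_o c(a)).
\]
The bijection $a \mapsto (c(a),\bar a)$ then identifies $(A,+,-,o)$ with $(o/\alpha_1) \times (A/\alpha_1)$ as abelian groups, which gives part~\eqref{it:t1}.

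For part~\eqref{it:t2} I would first verify that each $\alpha_i$ with $i\ge 1$ is a congruence of $\ab{A}'$: compatibility of $+, -$ with $\alpha_i$ reduces to preservation of $\alpha_i/\alpha_1$ under $\oplus, \ominus$ (inductive hypothesis) and of $\alpha_i$ under $+_o, -_o$ (polynomials of $\ab{A}$), together with the key observation that $\sigma(\bar a)$ and $\sigma(\bar{a'})$ are $\alpha_i$-related whenever $\bar a \,(\alpha_i/\alpha_1)\, \bar{a'}$, since any two elements of a common $\alpha_i$-class are $\alpha_i$-related and $\alpha_1 \le \alpha_i$. For the commutator bound $[1_A,\alpha_i]_{\ab{A}'} \le \alpha_{i-1}$ I would invoke the relational description of centrality (Lemma~\ref{lem:kiss}) applied to $\ab{A}'/\alpha_{i-1}$ with candidate central congruence $\alpha_i/\alpha_{i-1}$: the operations from $F$ preserve the associated $4$-ary relation $\rho$ by centrality of $L$ in $\ab{A}$, the operations $+_o, -_o$ preserve $\rho$ because they are polynomials of $\ab{A}$, and $\oplus, \ominus$ preserve the analogous relation on $A/\alpha_1$ by the inductive hypothesis.

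The main obstacle will be this final commutator verification. Because the transversal $\sigma$ is not a polynomial function of $\ab{A}$, one cannot argue directly that polynomial expansions preserve the commutator. The relational approach via Lemma~\ref{lem:kiss} avoids this by reducing the problem to compatibility of $+, -$ with $\rho$; the essential input is that $\sigma$ selects a single value from each $\alpha_1$-class, so its dependence on the input respects every congruence containing $\alpha_1$, in particular $\alpha_i$ and $\alpha_{i-1}$. Combining this with the preservation of $\rho$ by the building blocks $\oplus, +_o, -_o$ then yields the desired central-series property of $L$ in $\ab{A}'$.
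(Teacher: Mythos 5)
Your construction is essentially the paper's (a transversal for $\alpha_1$ together with the inductively obtained group on $A/\alpha_1$ and the group $+_o$ on $o/\alpha_1$), and your treatment of part~\eqref{it:t1}, of the congruence preservation, and of the centrality of $\alpha_i/\alpha_{i-1}$ for $i \ge 2$ (by passing to $\ab{A'}/\alpha_1$, where the transversal disappears, and using the inductive hypothesis plus Lemma~\ref{lem:kiss}) matches the paper's argument. The genuine gap is the case $i=1$: you must show that $\alpha_1$ is still central in $\ab{A'}$, i.e.\ that the new operations $+$ and $-$ preserve the relation $\gamma_1 = \{(x_1,x_2,x_3,x_4) : (x_1,x_2)\in\alpha_1,\ d(x_1,x_2,x_3)=x_4\}$, whose defining condition is an \emph{exact} equality, not a congruence modulo something above $\alpha_1$. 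Your stated ``essential input'' --- that $\sigma$ respects every congruence containing $\alpha_1$, ``in particular $\alpha_i$ and $\alpha_{i-1}$'' --- is vacuous here, since $\alpha_0 = 0_A$ does not contain $\alpha_1$; and the quotient $\ab{A'}/\alpha_0 = \ab{A'}$ gives no reduction, so neither the inductive hypothesis about $\oplus,\ominus$ nor the fact that $+_o,-_o$ are polynomials settles it, because $+$ and $-$ are not compositions of $\rho$-preserving operations on $A$ (they factor through $\sigma$ and the quotient).

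This missing case is exactly where the paper's proof does most of its work. There one first derives, from $[\alpha_1,1_A]_{\ab{A}}=0_A$ via Lemma~\ref{lem:plus1}, the exact identities $q + a = q +_o a = a +_o q$ for all $q \in o/\alpha_1$, $a \in A$, and $d(q +_o a, a, b) = q +_o b$, as well as $a -_o b = a + (-b)$ and $d(a,b,c) = a + (-b) + c$ whenever $(a,b)\in\alpha_1$; then, writing $x_1 = (x_1 -_o x_2) +_o x_2$ and $y_1 = (y_1 -_o y_2) +_o y_2$, one verifies by a direct chain of equalities (using that $\algop{A}{+,-,o}$ is an abelian group, which you have from part~\eqref{it:t1}) that $d(x_1+y_1, x_2+y_2, x_3+y_3) = x_4+y_4$, and similarly for $-$. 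Without this computation (or an equivalent argument) the induction does not close, since the inductive hypothesis, being a statement about $\ab{A}/\alpha_1$, carries no information about the bottom layer of the series. To repair your proposal, replace the appeal to ``$\sigma$ respects congruences above $\alpha_1$'' in the case $i=1$ by these exact identities and the explicit verification that $+$ and $-$ preserve $\gamma_1$.
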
     
  
\emph{Proof:} 
As in \eqref{eq:deplus}, we define
$x +_o y  :=  d (x,o,y)$, $x -_o y  :=  d (x,y,o)$, and
$-_o \, y := d (o,y,o)$ for $x,y \in A$. 
  We proceed by induction on $m$.
  We show that there exist $+,-$ such that
  \begin{enumerate}[label={(\roman*)}, ref={\roman*}]
      \item \label{it:f1} for each $i \in \{0,1,\ldots, m\}$, both  $+$ and $-$ preserve the congruence $\alpha_i$,
       \item \label{it:f3} the algebra $\algop{A}{+,-,o}$ is isomorphic to the abelian
    group $\ab{G}$ associated with $\ab{A}$, $L$ and zero $o$,
   \item \label{it:f2} for each $i \in \{1,\ldots, m\}$, both $+$ and $-$ preserve the
    relation $\gamma_i$ given by
    \[
       \gamma_i =  \{ (x_1,x_2,x_3,x_4) \in A^4 \mid
                      (x_1, x_2) \in \alpha_{i},
                      \big(d(x_1,x_2,x_3), x_4\big) \in \alpha_{i-1} \}.
       \]
  \end{enumerate}
  If $m = 0$, then $|A| = 1$. Defining $+$ as the only binary and $-$ as the
  only unary operation of this set, we see that
  $\algop{A}{+,-,o}$ is a one element group, and hence isomorphic
  to the one element group $\ab{G}$.
  
  Now we assume $m \ge 1$.   
  Let $\alpha := \alpha_1$.
    Then $\ab{A}/\alpha$ has a central series
$L_1 = \langle 0_{A/\alpha} = \alpha_1/\alpha, \alpha_2/\alpha, \ldots, \alpha_m/\alpha = 1_{A / \alpha} \rangle$
which is shorter than $L$,
and so we may apply the induction hypothesis on $\ab{A}/\alpha$ 
to obtain 
$\oplus : \ab{A}/\alpha \times \ab{A}/\alpha \to \ab{A} / \alpha$
and $\ominus : \ab{A} / \alpha \to \ab{A} / \alpha$ such that
$\algop{A/ \alpha}{\oplus,\ominus, o/\alpha}$ is isomorphic to
the abelian group associated to $\ab{A}/\alpha$,  $L_1$, and zero $o/\alpha$.
Furthermore, $\oplus$ and $\ominus$ preserve
all congruences in $L_1$  and, for each $i \in \{2, 3, \ldots, m\}$, the
relation
\begin{multline} \label{eq:di}
  \delta_i := \{ (y_1,y_2,y_3,y_4) \in (A/\alpha)^4 \mid
         (y_1, y_2) \in \alpha_i / \alpha, \\
         \big(d(y_1, y_2, y_3), y_4 \big) \in \alpha_{i-1} / \alpha \}.
\end{multline}
Now let $Q := o/\alpha$.
We choose $R$ to be a set of representatives of $A$ modulo $\alpha$ with
$o \in R$, and we let $r : A \to R$ be the function that assigns
to each $a$ the element $r(a) \in R$ with $(a, r(a)) \in \alpha$. 
We define the mapping
\[
  \psi :    A \to A/\alpha \times Q
\] by
\[
\psi (a) = (\psi_1 (a), \psi_2 (a)) := (a /\alpha, \, a -_o r(a))
\]
for $a \in A$. Searching for its inverse, we define
\[
\varphi : A/\alpha \times Q \to A
\]
by $\varphi (a / \alpha, q) := q +_o r(a)$.
We will now prove that $\psi$ is bijective and that $\varphi = \psi^{-1}$.
To this end, we first compute
$\varphi (\psi (a)) = \varphi ( a/\alpha, a -_o r(a) ) = (a -_o r(a)) +_o r(a)$.
Since $L$ is a central series,  we have $[\alpha, 1_A] = 0_A$, and therefore
Lemma~\ref{lem:plus1}\eqref{it:p3}
yields $(a -_o r(a)) +_o r(a) = a$.
Second, we let $a \in A$ and $q \in Q$ and compute
\[
   \begin{split}
     \psi (\varphi (a/\alpha, q)) &= \psi (q +_o r(a)) \\
     &= \big( (q +_o r(a)) / \alpha, \, (q +_o r(a)) -_o r(q +_o r(a)) \big).
   \end{split}
\]
Since $q +_o r(a) \equiv_{\alpha} o +_o r(a) = r(a)$, we have
\[
   \begin{split}
     \big( (q +_o r(a)) / \alpha,  \, (q +_o r(a)) - r(q +_o r(a)) \big)
     &=
     \big( r(a) /\alpha, \, (q +_o r(a)) -_o r (a) \big).
   \end{split}
\]
 Now applying Lemma~\ref{lem:plus1}\eqref{it:p4}, we obtain
 that the last expression is equal to
 $(a / \alpha, q)$. 
 Thus $\psi$ and $\varphi$ are mutually inverse to each other, and
 hence bijective.
  Now we define the functions $+ : A \times A \to A$ and $-:A \to A$ by
\[
    \begin{array}{rcl}
      a + b & := &  \varphi \big( (a / \alpha) \oplus (b/\alpha), \, \psi_2 (a) +_o \psi_2 (b) \big) \text{ and} \\
      - b & := &  \varphi \big( \ominus (b/\alpha), \, -_o \, \psi_2 (b) \big).
     \end{array}  
\]
for $a, b \in A$.
We now prove that $+$ and $-$ satisfy the required properties and start with property~\eqref{it:f1}.
 We consider the algebra
 \[
     \algop{B}{\boxplus, \boxminus, o'} := \algop{A / \alpha}{\oplus,\ominus, o/\alpha } \times \algop{Q}{+_o,-_o, o}.
 \]
Since
\[
\begin{split}
   \psi (a + b ) &= \big( (a / \alpha) \oplus (b / \alpha),  \, \psi_2 (a) +_o \psi_2 (b) \big) \\
                   &= \big(  \psi_1 (a) \oplus \psi_1 (b),  \, \psi_2 (a) +_o \psi_2 (b) \big) \\
                   &= \big( \psi_1 (a), \psi_2 (a) \big) \boxplus \big(\psi_2 (b), \psi_2 (b) \big) \\
   &=  \psi (a) \boxplus \psi (b)
\end{split}
\]
for all $a, b \in A$,
and since, similarly, $\psi (-b) = \boxminus (\psi (b))$ and
$\psi (o) = o'$, the
mapping $\psi$ is an isomorphism from $\algop{A}{+,-,o}$ to $\algop{B}{\boxplus, \boxminus, o'}$ and
$\psi_1$ is an epimorphism from $\algop{A}{+,-,o}$ to $\algop{A / \alpha}{\oplus,\ominus, o/\alpha}$.
Since the kernel of $\psi_1$ is $\alpha$, we see that $\alpha$ is a congruence relation of
$\algop{A}{+,-,o}$, and therefore $+$ and $-$ preserve $\alpha_1$.
In order to show that $+$ and $-$ preserve $\alpha_i$ for $i \ge 2$, we let $i \in \{2,\ldots, m\}$
and observe that by the construction of $\oplus$ and $\ominus$ through the
induction hypothesis, $\alpha_i / \alpha$ is a congruence relation of $\algop{A/\alpha}{\oplus, \ominus, o/\alpha}$,
and therefore its pre-image $\beta$ under the homomorphism $\psi_1$,
given by
\[
\beta := \{ (a, b) \in A \mid \big(\psi_1 (a), \psi_1 (b)\big) \in \alpha_i / \alpha \},
\]
is a congruence
                 of $\algop{A}{+,-,o}$.
                 By its definition, $\beta= \{ (a, b) \in A \mid (a / \alpha, b / \alpha) \in \alpha_i / \alpha \}
                 = \alpha_i$. Hence, $+$ and $-$ preserve $\alpha_i$. This completes the proof of
                 item~\eqref{it:f1}.

                 Next, we prove item~\eqref{it:f3}.
                 The group $(Q, +_o,-_o, o)$ is isomorphic to the component $\ab{G}_1$ of the abelian group $\ab{G} := \prod_{i=1}^m \ab{G}_i$ associated with
                 $\ab{A}$, $L$, and $o$. The group associated with $\ab{A}/\alpha$, $L_1$ and $o/\alpha$ is
                 isomorphic to $\prod^m_{i=2} \ab{G}_i$.
                 Hence  $\algop{B}{\boxplus,\boxminus,o'} = \algop{A / \alpha}{\oplus,\ominus, o/\alpha} \times \algop{Q}{+_o,-_o, o}$
                 is isomorphic to $\ab{G}$, and therefore $\algop{A}{+,-,o}$ is isomorphic to $\ab{G}$.
                 This completes the proof of~\eqref{it:f3}, and thus item~\eqref{it:t1} of the statement
                 of the theorem is proved.

                 For~\eqref{it:f2}, we first consider the case $i \ge 2$.
                 Let $\ab{A'}$ be the expansion $\algop{A}{F \cup \{ +, - , o \}}$ of
                 $\ab{A}$. Its homomorphic image $\ab{A'}/\alpha$ is equal to
                 $\algop{A/\alpha}{F \cup \{ \oplus, \ominus, o/\alpha \}}$.
                 By the construction of $\oplus$ and $\ominus$ as functions preserving
                 the relations in~\eqref{eq:di} and the relational description of
                 centrality  (Lemma~\ref{lem:kiss}), we have
                 $[1_{A/\alpha}, \alpha_i/\alpha]_{\ab{A'} / \alpha} \le \alpha_{i-1}/\alpha$.
                 Hence 
                 $[1_A, \alpha_i]_{\ab{A'}} \le \alpha_{i-1}$, and thus the relational description
                 of commutators implies that $+$ and $-$ preserve $\gamma_i$.
                 Before proving that $+$ and $-$ also preserve $\gamma_1$, which encodes the centrality of $\alpha_1 = \alpha$ in $\ab{A'}$, we
                 prove some connections between $+, -, +_o, -_o$ and the Mal'cev term $d$.
                 First, we check that for all $a \in A$ and $q \in Q$, we have
                 \begin{equation} \label{eq:qprops}
                     q + a = q +_o a = a +_o q.
                 \end{equation}
                 For proving  the first equality, we compute
                 \[
                    \begin{split}
                      q + a &=  \varphi \big( ( q / \alpha) \oplus (a / \alpha ), \, \psi_2 (q) +_o \psi_2 (a) \big) \\
                               &=  \varphi \big( ( o / \alpha) \oplus  (a / \alpha ), \, (q -_o r (q)) +_o (a -_o r (a)) \big) \\
                      &=  \varphi \big(    a/\alpha, \, (q -_o o) +_o (a -_o r(a)) \big) \,\,
                               \\
                               &=  \varphi \big( a/\alpha, \, q +_o (a -_o r(a)) \big) \\
                               &=  \big(q +_o (a -_o r(a))\big) +_o r (a) \\
                               &=  q +_o \big( (a -_o r(a)) +_o r(a) \big) \,\, (\text{by Lemma~\ref{lem:plus1}\eqref{it:p7}})
                               \\
                               &=  q +_o a \,\,  (\text{by Lemma~\ref{lem:plus1}\eqref{it:p3}}).
                    \end{split}
                    \]
                   The second equality of~\eqref{eq:qprops} now follows from Lemma~\ref{lem:plus1}\eqref{it:p6}.
                 We will also need that for all $q \in Q$ and $a,b \in A$, we have
                 \begin{equation} \label{eq:dprops}
                   d (q +_o a, a, b) = q +_o b.
                 \end{equation}
                 This follows from Lemma~\ref{lem:plus1}\eqref{it:p8}.
                 Next, we observe that if $(v, w) \in \alpha$,
                 then $w -_o v \in Q$ and 
                 then $v = (w -_o v) +_o v$ by Lemma~\ref{lem:plus1}\eqref{it:p3}.

                 With these preparations, we are ready to prove that $+$ preserves $\gamma_1$.
                 To this end,
                 let $(x_1, x_2, x_3, x_4) \in \gamma_1$ and $(y_1, y_2, y_3, y_4) \in \gamma_1$.
                 We have to prove
                 \begin{equation} \label{eq:fgamma}
                 \big( x_1 + y_1, x_2 + y_2, x_3+ y_3, x_4 + y_4 \big) \in \gamma_1.
                 \end{equation}
                 By the fact that $+$ preserves $\alpha_1$, we obtain
                 $ (x_1 + y_1, x_2 + y_2) \in \alpha_1$. Hence for 
                 completing the proof of~\eqref{eq:fgamma}, we have to show
                 \[
                 d ( x_1 + y_1, x_2 + y_2, x_3 + y_3 ) =   x_4 + y_4.
                 \]
                 We set
                 \[
                    v := x_2, \, y := x_1 -_o x_2, \, w := y_2, \, z := y_1 -_o y_2.
                 \]
                 and compute
                 \[
                 \begin{split}
                    \mbox{} &
                  d \big( x_1 + y_1, x_2 + y_2, x_3 + y_3) \big) \\ &=
                  d \big( (y +_o v) +  (z +_o w), v + w, x_3 + y_3) \big) \\
                   &=
                  d \big( (y + v) + (z + w)) , v + w, x_3 + y_3 \big) \,\, (\text{by \eqref{eq:qprops}}) \\
                  &=
                  d \big(  (y + z) + (v + w), v + w, x_3 + y_3 \big)  \,\, (\text{because $\algop{A}{+}$ is abelian}) \\
                  &=
                  d \big( (y +_o z) +_o (v + w), v + w, x_3 + y_3 \big) (\text{by \eqref{eq:qprops}})  \\
                  &=
                  (y +_o z) +_o (x_3 + y_3) \,\, (\text{by~\eqref{eq:dprops}}) \\
                  &=
                  (y + z) + (x_3 + y_3) \,\, (\text{by~\eqref{eq:qprops}}) \\
                  &=
                  (y + x_3) + (z + y_3) \,\, (\text{because $\algop{A}{+}$ is abelian}) \\
                  &=
                  (y +_o x_3) +  (z +_o y_3) \,\, ( \text{by~\eqref{eq:qprops}}) \\
                  &=
                  d (y +_o v, v, x_3) + d (z +_o w, w, y_3)  \,\, (\text{by~\eqref{eq:dprops}}) \\
                  &=
                  d (x_1, x_2, x_3) +  d(y_1, y_2, y_3) \\ 
                  &=
                  x_4 + y_4 \,\, (\text{because $(x_1,x_2,x_3,x_4) \in \gamma_1$ and
                                                $(y_1,y_2,y_3,y_4) \in \gamma_1$}).                    
                     \end{split}
      \]
      This completes the proof of~\eqref{eq:fgamma}, and therefore $+$ preserves $\gamma_1$.
      We now show that $-$ preserves $\gamma_1$. As a first step, we show that for
      all $a,b,c \in A$ with $(a,b) \in \alpha$,
      we have
      \begin{equation} \label{eq:ab}
           a -_o b = a + (-b) \text{ and } d (a,b,c) = a + (-b) + c.
      \end{equation}
      From Lemma~\ref{lem:plus1}\eqref{it:p3},
      we obtain $a = (a -_o b) +_o b$, which is equal to
      $(a -_o b) + b$ by~\eqref{eq:qprops}.
      Since $\algop{A}{+,-,o}$ is a group, we
      therefore have
      $a + (-b) = a -_o b$, establishing the first part of~\eqref{eq:ab}.
      For the second part, we observe that
      \[
      \begin{split}
        d (a,b,c) &=
        d (a -_o b) +_o b, b, c) \\
        &=
        (a -_o b) +_o c \,\, (\text{by \eqref{eq:dprops}}) \\
        &=
        (a -_o b) + c \,\, (\text{by \eqref{eq:qprops}}) \\
        &=   
          (a + (-b)) + c \,\, (\text{by the first part of \eqref{eq:ab}}).
      \end{split}
      \]
      We now take $(x_1,x_2,x_3,x_4) \in \gamma_1$, and prove that
      $(-x_1, -x_2, -x_3, -x_4) \in \gamma_1$. Since $-$ preserves $\alpha$,
      $(-x_1, -x_2) \in \alpha$, and thus it remains to show that
      \begin{equation} \label{eq:mp}
        d(-x_1, -x_2, -x_3) = -x_4.
      \end{equation}
      We have
      \[
      \begin{split}
        d (-x_1, -x_2, -x_3) &=
        (-x_1) + x_2 + (-x_3) \,\, (\text{by~\eqref{eq:ab}}) \\
        &=
        - (x_1 +  (- x_2) + x_3) \,\, (\text{because $\algop{A}{+,-,o}$ is an abelian group}) \\
        &=
        - d(x_1, x_2, x_3) \,\, (\text{by~\eqref{eq:ab}}) \\
          &=
          - x_4.
      \end{split}
      \]
      Hence $(-x_1,-x_2,-x_3,-x_4) \in \gamma_1$, and therefore
      $-$ preserves $\gamma_1$, which completes the proof of~\eqref{it:f3}.

      Now to establish~\eqref{it:t2}, we observe that for each $i \in \{1,\ldots, m\}$,
      the nilpotency of $\ab{A}$ implies 
      $[1, \alpha_i]_{\ab{A}} \le \alpha_{i-1}$.
      Thus (by Lemma~\ref{lem:kiss}) each fundamental operation of $\ab{A}$ preserves $\gamma_i$. Since also $+$ and $-$ preserve
      $\gamma_i$ by item~\eqref{it:f2}, all fundamental operations of $\ab{A'}$ preserve $\gamma_i$,
      which implies $[1, \alpha_i]_{\ab{A'}} \le \alpha_{i-1}$. Hence $\ab{A'}$ is nilpotent
      of class at most $m$.  \qed

  \section{Clones of polynomials} \label{sec:cop}
  All finitary functions on a finite field are induced by polynomials.
  When considering polynomials instead of functions, we can use
  notions such as \emph{degree} or \emph{monomial}. Such an
  approach has been used, e.g., in \cite{Kr:CFSO}.
    In this section, we will study polynomials
    in the polynomial ring $\KX = \bigcup_{n \in \N} \Kxn{n}$ over
    countably many variables over some (not necessarily finite) field $\ab{K}$.
    Adapting \cite{CF:FCAR}, we define the product of $A, B \subseteq \KX$
    by
    \[
        A B = \{ p (q_1, \ldots, q_n) \setsuchthat
        n \in \N, p \in A \cap \Kxn{n}, q_1, \ldots, q_n \in B \}.
    \]    
    Here $p(q_1, \ldots, q_n)$ denotes the polynomial obtained from
    $p$ by substituting simultaneously each variable $x_i$ with
    $q_i$.
    We say that a subset $C$ of $\KX$ is a \emph{clone of polynomials} if
    for each $i \in \N$, $x_i \in C$ and $CC \subseteq C$.
    Given a subset $F$ of $\KX$, we use $\Clop (F)$ to
    denote the clone of polynomials that is generated by $F$.
    By $L$, we denote the set
    \[
    L := \{ \sum_{i = 1}^n a_i x_i \setsuchthat
    n \in \N,  \forall i \in \{1,\ldots, n\} : a_i \in \Z \}
    \]
    Hence $L = \Clop (\{x_1+x_2,-x_1,0\})$, and if $F$ is a nonempty subset of
    $\KX$, then
    $L F$ is exactly the subgroup of $\algop{\KX}{+,-,0}$ generated by $F$.

    We notice that a clone of polynomials is not a clone in the usual sense,
    since its elements are polynomials, and not finitary functions on some set.
    A bridge between these concepts is provided in the following lemma.
    For a field $\ab{K}$, $f \in \Kxn{n}$ and $m \ge n$, we let $f^{\ab{K},m}$ be the $m$-ary function
    that $f$ induces on $K$. For example $x_3^{\ab{K}, 5}$ induces the projection
    $(a_1, \ldots, a_5) \mapsto a_3$ on $K$. 
    \begin{lem}
      Let $\ab{K}$ be a field, and let $C \subseteq \KX$ be a clone of polynomials.
      Let
      \[
      C' := \{ f^{\ab{K}, m} \mid n \in \N, f \in \Kxn{n}, m \ge n \}.
      \]
      Then $C'$ is a clone on the set $K$.
    \end{lem}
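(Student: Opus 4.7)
The plan is to verify the two defining properties of a clone of operations on $K$: that $C'$ contains all projections on $K$, and that $C'$ is closed under the usual composition of finitary operations.

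For the projections, I would note that by the definition of a clone of polynomials, each variable $x_i$ lies in $C$, so for every $m \ge i$ the induced function $x_i^{\ab{K},m} : K^m \to K$ is in $C'$, and this is precisely the $i$-th $m$-ary projection on $K$.

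For closure under composition, suppose $g' \in C'$ has arity $m$ and $h'_1,\dots,h'_m \in C'$ have arity $k$. Write $g' = g^{\ab{K},m}$ with $g \in C \cap \Kxn{n}$ for some $n \le m$, and write $h'_i = h_i^{\ab{K},k}$ with $h_i \in C$ for each $i$. Since $g'$ depends on its first $n$ arguments only through $g$, the composition $(a_1,\dots,a_k) \mapsto g'(h'_1(a_1,\dots,a_k),\dots,h'_m(a_1,\dots,a_k))$ agrees on $K^k$ with the function induced by the polynomial $p := g(h_1,\dots,h_n) \in \KX$ obtained by substituting $h_1,\dots,h_n$ for $x_1,\dots,x_n$ in $g$. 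By the definition of $C C$ given in the paragraph before the lemma, $p \in CC$, and since $C$ is a clone of polynomials we have $CC \subseteq C$, hence $p \in C$. Choosing an $N$ with $p \in K[x_1,\dots,x_N]$ and $N \le k$ (which is possible because each $h_i$ uses only variables among $x_1,\dots,x_k$), we conclude that $p^{\ab{K},k} \in C'$ and equals the composition above.

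There is no real obstacle here; the argument is essentially a bookkeeping check that the polynomial operation $CC \subseteq C$ translates into the functional composition of the induced operations, and that the extra dummy arguments allowed by the condition $m \ge n$ in the definition of $C'$ are handled correctly. The only point that needs a moment's care is making sure that, when composing, the polynomial $p = g(h_1,\dots,h_n)$ is viewed as living in sufficiently many variables so that $p^{\ab{K},k}$ is defined and coincides with the composition of the induced functions on $K^k$.
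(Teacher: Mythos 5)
Your proof is correct: the paper in fact states this lemma without any proof, and your verification (projections come from $x_i \in C$, and composition of induced functions corresponds to polynomial substitution, which lands in $CC \subseteq C$, with the dummy-variable bookkeeping handled by the condition $m \ge n$) is exactly the routine argument the paper implicitly intends. You also read the definition of $C'$ in the intended way, namely with $f \in C \cap \Kxn{n}$ (the displayed formula in the paper omits the membership in $C$, evidently a typo), which is the right interpretation.
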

    Given this close connection, it is not suprising that we may transfer some
    results from clone theory to clones of polynomials.
    \begin{lem}[Associativity Lemma, cf. \cite{CF:FCAR}] \label{lem:ass}
      Let $A,B,C \subseteq \KX$, let $L := \Clop (\{x_1 + x_2, -x_1 , 0\})$,
      and let $P := \Clop (\emptyset) = \{x_i \mid i \in \N\}$.
      Then we have:
      \begin{enumerate}
      \item \label{it:a1} $(AB)C \subseteq A (BC) \subseteq (A (BP)) C$. In particular if $BP \subseteq B$, then
           $(AB)C= A(BC)$.
      \item \label{it:a2} $L (AL)$ is closed under composition with polynomials in $L$ from both sides;
        in other words,
             $L \, (L (AL)) \subseteq L (AL)$ and
             $(L (AL)) \, L \subseteq L (AL)$.
      \end{enumerate}
    \end{lem}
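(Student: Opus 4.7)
The plan is to prove part~\eqref{it:a1} directly from the associativity of polynomial substitution in $\KX$, and then derive part~\eqref{it:a2} as a formal consequence of~\eqref{it:a1} together with two elementary closure properties of $L$.

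For the first inclusion of~\eqref{it:a1}, a typical element of $(AB)C$ has the form $r(s_1, \dots, s_k)$ with $s_1, \dots, s_k \in C$ and $r = p(q_1, \dots, q_n) \in AB$, where $p \in A \cap \Kxn{n}$ and $q_1, \dots, q_n \in B$. Associativity of substitution in $\KX$ yields
\[
r(s_1, \dots, s_k) = p\bigl(q_1(s_1, \dots, s_k), \dots, q_n(s_1, \dots, s_k)\bigr),
\]
and each inner expression lies in $BC$, so the element is in $A(BC)$. For the second inclusion $A(BC) \subseteq (A(BP))C$, a typical element has the form $p(r_1, \dots, r_n)$ with $p \in A \cap \Kxn{n}$ and $r_i = q_i(s_1^{(i)}, \dots, s_{k_i}^{(i)})$, where $q_i \in B$ and $s_j^{(i)} \in C$; the difficulty is that the outer tuples may differ in $i$. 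I concatenate them into one global list $(s_1, \dots, s_N)$ of length $N = k_1 + \cdots + k_n$ and, for each pair $(i,j)$, record the position $\ell_j^{(i)} \in \{1, \dots, N\}$ of $s_j^{(i)}$ in this list. Setting $u_i := q_i(x_{\ell_1^{(i)}}, \dots, x_{\ell_{k_i}^{(i)}})$, each $u_i$ lies in $BP$ since the $x_{\ell_j^{(i)}}$ are projections, and a second application of associativity of substitution gives
\[
p(u_1, \dots, u_n)(s_1, \dots, s_N) = p(r_1, \dots, r_n),
\]
which places the element in $(A(BP))C$. The ``in particular'' clause is then immediate, since $BP \subseteq B$ implies $A(BP) \subseteq AB$, and hence $(A(BP))C \subseteq (AB)C$.

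For part~\eqref{it:a2}, I first record two properties of $L$: substituting projections for variables in $\sum a_i x_i$ with $a_i \in \Z$ again yields a $\Z$-linear combination of variables, so $LP \subseteq L$; and substituting such combinations into another such combination yields a $\Z$-linear combination, so $LL \subseteq L$. For the left-hand closure, applying the second inclusion of~\eqref{it:a1} with the triple $(A,B,C) = (L, L, AL)$ gives
\[
L(L(AL)) \subseteq (L(LP))(AL) \subseteq (LL)(AL) \subseteq L(AL).
\]
For the right-hand closure, the first inclusion of~\eqref{it:a1} yields $(L(AL))L \subseteq L((AL)L)$ and, separately, $(AL)L \subseteq A(LL) \subseteq AL$, so $(L(AL))L \subseteq L(AL)$.

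The main obstacle is the renaming argument in the second inclusion of~\eqref{it:a1}: substitution in the polynomial ring is strictly associative, but the definition of the product forces every outer substitution to use a single common tuple, so the freedom to pick different inner tuples $(s_j^{(i)})_j$ across the indices $i$ must be absorbed into projections, which is exactly what the factor $BP$ records on the right-hand side.
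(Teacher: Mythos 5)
Your proposal is correct and takes essentially the same route as the paper: the paper dismisses part (1) as straightforward (citing Couceiro--Foldes) and proves part (2) by exactly the two chains of inclusions you give, using $LP\subseteq L$ and $LL\subseteq L$; your contribution is merely to fill in the substitution/renaming details for part (1). One tiny point to tighten in your first inclusion: a presented $q_i$ may contain variables $x_j$ with $j>k$ that cancel in the composite $r\in \Kxn{k}$, so instead of writing $q_i(s_1,\dots,s_k)$ one should substitute an arbitrary element of $C$ (say $s_1$) for those extra variables; since they do not occur in $r$, the composed polynomial is unchanged and each inner piece genuinely lies in $BC$.
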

    \emph{Proof:}
    The proof of item~\eqref{it:a1} is straightforward and can be developed
    along the lines of~\cite{CF:FCAR}.
    Item~\eqref{it:a2} then follows by observing
    $L(L (AL)) \subseteq (L (LP)) (AL) = (LL) (AL) = L (AL)$  
     and
    $(L (AL))L \subseteq L ((AL) L) \subseteq L (A (LL)) = L (AL)$. \qed

     For a set $C \subseteq \KX$, let  $C^{(0)} := \{x_i \mid i \in \N\}$ and
     for $n \in \N_0$, $C^{(n+1)} = C^{(n)} \cup (C \, C^{(n)})$. For $f \in \KX$, the \emph{depth of $f$
       with respect to $C$}, denoted by $\delta_C (f)$,  is the smallest $n \in \N$ with $f \in C^{(n)}$, and undefined
      if no such $n$ exists.
      \begin{lem} \label{lem:clonegen}
        Let $\ab{K}$  be a field, and let $C \subseteq \KX$. Then
        the clone generated by $C$, $\Clop (C)$, is equal to $\bigcup \{ C^{(n)} \mid n \in \N_0 \}$.
        If $M \subseteq \KX$ is such that $\{x_i \mid i \in \N\} \subseteq M$
        and $CM \subseteq M$, then $\Clop (C) \subseteq M$.
      \end{lem}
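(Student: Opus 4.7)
The plan is to set $U := \bigcup_{n \in \N_0} C^{(n)}$ and prove both inclusions $U \subseteq \Clop(C)$ and $\Clop(C) \subseteq U$, and then to read off the second assertion of the lemma from the characterization $\Clop(C) = U$ by a simple induction.

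For the easy inclusion $U \subseteq \Clop(C)$, I would argue by induction on $n$ that $C^{(n)} \subseteq \Clop(C)$. The base $n = 0$ is immediate since every clone of polynomials contains the variables $x_i$. For the step, if $C^{(n)} \subseteq \Clop(C)$, then $C \subseteq \Clop(C)$ (using that $C \subseteq C \cdot C^{(0)} \subseteq C^{(1)}$, or directly) together with the closure property $\Clop(C) \cdot \Clop(C) \subseteq \Clop(C)$ yields $C \cdot C^{(n)} \subseteq \Clop(C)$, and hence $C^{(n+1)} = C^{(n)} \cup (C \cdot C^{(n)}) \subseteq \Clop(C)$.

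The reverse inclusion $\Clop(C) \subseteq U$ is proved by showing that $U$ is itself a clone of polynomials containing $C$. That $U$ contains all $x_i$ is immediate from $C^{(0)} \subseteq U$, and $C \subseteq C \cdot C^{(0)} \subseteq C^{(1)} \subseteq U$. The nontrivial point, and the main step of the proof, is $UU \subseteq U$: given an $n$-ary $p \in U$ and $q_1, \ldots, q_n \in U$, I want $p(q_1, \ldots, q_n) \in U$. I will induct on the depth $\delta_C(p)$. If $\delta_C(p) = 0$, then $p = x_i$ for some $i$, so $p(q_1, \ldots, q_n) = q_i \in U$. If $\delta_C(p) = k+1$, then by the definition of $C^{(k+1)}$ either $p \in C^{(k)}$ (handled by the inductive hypothesis on the depth of $p$) or $p = c(r_1, \ldots, r_m)$ with $c \in C \cap \Kxn{m}$ and $r_1, \ldots, r_m \in C^{(k)}$. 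In the latter case the inductive hypothesis gives $r_j(q_1, \ldots, q_n) \in U$, so each lies in some $C^{(k_j)}$; setting $K := \max_j k_j$, all these compositions lie in $C^{(K)}$, and therefore $p(q_1, \ldots, q_n) = c\bigl(r_1(q_1, \ldots, q_n), \ldots, r_m(q_1, \ldots, q_n)\bigr) \in C \cdot C^{(K)} \subseteq C^{(K+1)} \subseteq U$.

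For the final assertion, assuming $M$ contains all variables and satisfies $CM \subseteq M$, I prove $C^{(n)} \subseteq M$ by induction on $n$: the base case is the hypothesis $\{x_i \mid i \in \N\} \subseteq M$, and if $C^{(n)} \subseteq M$, then $C \cdot C^{(n)} \subseteq CM \subseteq M$, so $C^{(n+1)} \subseteq M$. Taking the union and invoking the first part gives $\Clop(C) \subseteq M$. I do not expect a real obstacle here — the whole argument is a bookkeeping exercise, with the only technical point being the depth induction for composition, and even that follows the standard pattern for term-building arguments.
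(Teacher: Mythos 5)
Your proof is correct and takes essentially the same route as the paper: both show $UU\subseteq U$ by induction on the level $n$ (your depth $\delta_C(p)$ is just this index), decomposing $p=c(r_1,\ldots,r_m)$ with $c\in C$, $r_j\in C^{(k)}$, applying the induction hypothesis to the $r_j(q_1,\ldots,q_n)$ and pushing the result into $C^{(K+1)}$, and both handle the second assertion by the same straightforward induction $C^{(n)}\subseteq M$. The only cosmetic difference is that you spell out the easy inclusion $U\subseteq\Clop(C)$ by induction, which the paper simply notes.
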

      \emph{Proof:} Let $U := \bigcup \{ C^{(n)} \mid n \in \N_0 \}$. Then
      $C \subseteq C^{(1)} \subseteq U \subseteq \Clop (C)$, and hence it is
      sufficient to prove $UU \subseteq U$. To this end, we prove by induction on
      $n$ that $C^{(n)} U \subseteq U$.
      This is obvious for $n=0$. For the induction step, we let $n \in \N_0$, $m \in \N$,
      $u \in (C^{(n+1)} \setminus C^{(n)}) \cap  \Kxn{m}$, and $v_1, \ldots, v_m \in U$.
      Since $u \in CC^{(n)}$, there are $l \in \N$, $r \in C$ and $s_1, \ldots, s_l \in C^{n}$
      with $u = r (s_1, \dots, s_l)$.
      Now $u (v_1, \ldots, v_m) = r (s_1 (v_1, \ldots, v_m), \ldots, s_l (v_1, \ldots, v_m))$.
      Then each $s_i (v_1, \ldots, v_m)$ is an element of $U$ by the induction hypothesis.
      Thus there is $k \in \N$ such that
      $\{ s_i (v_1, \ldots, v_m) \mid i \in \{1,\ldots, l\} \}
      \subseteq C^{(k)}$, and therefore $u (v_1, \ldots, v_m) \in C^{(k+1)} \subseteq U$. This completes
      the induction step; therefore $UU \subseteq U$ and $U = \Clop (C)$.
      We show the second part by proving that for all $n \in \N$, $C^{(n)} \subseteq M$.
      The induction basis $n=0$ follows from the condition $\{x_i \mid i \in \N\} \subseteq M$.
      For the induction step, let $n \in \N_0$.
      Then $C^{(n+1)} =  C^{(n)} \cup (CC^{(n)}) \subseteq
            M \cup CM$
      by the induction hypothesis. Applying
      the assumption $CM \subseteq M$, we obtain $C^{(n+1)} \subseteq M$.
      Thus $\Clop (C) \subseteq M$. \qed

    The \emph{total degree of of a monomial} is defined by
    \[
    \deg ( a \prod_{i = 1}^n x_i^{\alpha_i} ) :=
    \sum_{i=1}^n \alpha_i
    \]
    for $n \in \N$ and $a \in K \setminus \{0\}$ and the \emph{total degree
    of a polynomial} is the maximum of the total degrees of
    its monomials.
    A polynomial is called \emph{homovariate} if all of its
    monomials contain exactly the same variables.
    For example, over $K = \Z_7$, each of the polynomials
    $5 x_1 x_2^3 x_4 - 2 x_1^{17} x_2 x_4^{3} + x_1^{6} x_2^3 x_4^{20}$,
    $x_2 + 6 x_2^4$ and $2$ is homovariate, but none of 
    the polynomials $x_1 + x_2$, $1 +3 x_1^3 + x_1^5$ is homovariate. 
    For a finite subset $I$ of $\N$, the \emph{homovariate component}
    $H_I (p)$ of the polynomial $p$ with respect to $I$ is defined as
    the sum
    those monomials whose set of variables is $I$.
    As an example, we compute
    \[
    H_{\{2,3,4\}} (5 x_2^2 x_3 + 7 x_2^2 x_3 x_4^5 + x_1x_2x_3x_4 +
    4 x_4^3 x_5 + 13 x_2^6 x_3^8 x_4^7)
    = 
    7  x_2^2 x_3 x_4^5 +
    13 x_2^6 x_3^8 x_4^7
    \]
    and $H_{\{2,3\}} (x_2^2 + x_3) = 0$.
    Hence each polynomial is the sum of all of its homovariate components.
    For a set of  polynomials $F \subseteq \KX$,
    \[
       \Hoc(F) := \{ H_I (f) \mid
       I \subseteq \N, f \in F\}
       \]
       is the set of the homovariate components of elements of $F$.
       We note that by this definition, for every polynomial $f$,
       $0 \in \Hoc (\{f\})$: let $j \in \N$ be such that
       $x_j$ does not
       occur in $f$.  Then $H_{\{j\}} (f) = 0$. We also see that
       for every
       $f \neq 0$, the set $\Hoc (\{ f \})$ has at least two
       elements.
    \begin{lem}  \label{lem:hv}
      Let $F \subseteq \KX$, and let $L := \Clop (\{x_1+x_2, -x_1, 0\})$.
      Then we have:
      \begin{enumerate}
          \item $F \subseteq L \, \Hoc (F)$.   
          \item $\Hoc (F) \subseteq L (FL)$.
      \end{enumerate}      
        \end{lem}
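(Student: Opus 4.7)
My plan is to prove part~(1) directly from the decomposition of a polynomial into its homovariate components, and to prove part~(2) by an inclusion-exclusion (Möbius inversion) that extracts a single homovariate component as an integer linear combination of polynomials obtained from $f \in F$ by setting certain variables to~$0$.

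For part~(1), the observation immediately preceding the lemma is that every polynomial equals the sum of its homovariate components. Hence if $f \in F$ has nonzero homovariate components $H_{I_1}(f), \ldots, H_{I_k}(f)$, then setting $p(y_1, \ldots, y_k) := y_1 + y_2 + \cdots + y_k \in L$ gives $f = p(H_{I_1}(f), \ldots, H_{I_k}(f)) \in L\,\Hoc(F)$.

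For part~(2), fix $f \in F$ and a finite $I \subseteq \N$. If some element of $I$ does not appear as a variable of $f$, then $H_I(f) = 0 \in L \subseteq L(FL)$, so we may assume $I \subseteq \{1,\ldots,n\}$ for some $n$ with $f \in \Kxn{n}$. For each $J \subseteq I$, let $f_J := f(s_1^J, \ldots, s_n^J)$, where $s_i^J := x_i$ if $i \in J$ and $s_i^J := 0$ otherwise. Both $x_i$ and $0$ lie in $L$, so $f_J \in FL$. A monomial of $f$ survives in $f_J$ exactly when all of its variables lie in $J$, hence $f_J = \sum_{K \subseteq J} H_K(f)$. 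Möbius inversion over the Boolean lattice of subsets of $I$ then yields
$$H_I(f) = \sum_{J \subseteq I} (-1)^{|I \setminus J|} f_J,$$
an integer linear combination of elements of $FL$, hence an element of $L(FL)$.

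Neither part presents a real obstacle: part~(1) is essentially a restatement of the decomposition of a polynomial into homovariate components, and part~(2) is a standard application of Möbius inversion. The only points to verify are that the substitutions $x_i \mapsto 0$ and $x_i \mapsto x_i$ both lie in $L$ (they do, since $L$ is the set of integer linear combinations of variables, including the empty combination $0$), and that the degenerate case $I = \emptyset$ reduces correctly to $H_\emptyset(f) = f_\emptyset = f(0,\ldots,0) \in FL$.
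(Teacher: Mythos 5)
Your proof is correct, and for part~(2) it takes a genuinely different route from the paper's. Part~(1) is the same one-line observation in both. For part~(2), the paper argues by induction on the number of homovariate components of $f$: it lists the subsets of $\{1,\ldots,n\}$ in an order compatible with inclusion, takes the first index $k$ with $H_{I_k}(f)\neq 0$, notes that substituting $0$ for all variables outside $I_k$ turns $f$ into exactly $H_{I_k}(f)\in \{f\}L$, and then applies the induction hypothesis to $f-H_{I_k}(f)$, invoking the Associativity Lemma (Lemma~\ref{lem:ass}\eqref{it:a2}) to pull everything back into $L(\{f\}L)$. You instead extract every component in closed form: with $f_J$ the specialization of $f$ obtained from the substitutions $x_i\mapsto x_i$ or $x_i\mapsto 0$ (all of which lie in $L$, so $f_J\in FL$), one has $f_J=\sum_{K\subseteq J}H_K(f)$, and M\"obius inversion gives $H_I(f)=\sum_{J\subseteq I}(-1)^{|I\setminus J|}f_J$, a $\Z$-linear combination of elements of $FL$ and hence an element of $L(FL)$; the cancellation $\sum_{J\colon K\subseteq J\subseteq I}(-1)^{|I\setminus J|}=0$ for $K\subsetneq I$ already takes place in $\Z$, so there is no issue with the characteristic of $\ab{K}$. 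Your argument buys an explicit formula and needs neither the induction nor Lemma~\ref{lem:ass}; the paper's peeling-off recursion has the advantage of being the same mechanism it reuses later (e.g.\ in Lemma~\ref{lem:abs1}). One cosmetic slip: in the case where some index of $I$ is not a variable of $f$, you justify $H_I(f)=0\in L(FL)$ via ``$L\subseteq L(FL)$'', which is false in general (for $F=\{x_1x_2\}$, every element of $L(FL)$ is a $\Z$-combination of products of two linear forms, so $x_1\notin L(FL)$); what is true and suffices is $0\in L(FL)$, obtained by composing the zero polynomial of $L$ with $f\in FL$ --- and in fact your inversion formula covers this degenerate case anyway, so nothing is lost.
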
  
        \emph{Proof:}
    For every $f \in F$, we have
    $f = \sum_{h \in \Hoc (f)} h$, and therefore
    $F \subseteq L \, \Hoc (F)$.
    For proving the second assertion,
    we show that for every $n \in \N$
     and every $f \in \Kxn{n}$,
     every $h \in \Hoc(\{f\})$ satisfies $h \in L (\{f\} L)$.
     We proceed by induction on the  number of homogeneous components
     of $f$, i.e., on $|\Hoc (\{f\})|$. If $|\Hoc(\{f\})|  = 1$,
     then $f = 0$, therefore $\Hoc(\{f\}) = \{0\}$ and thus
     $\Hoc (\{f\}) \subseteq L(\{f\} L)$.
     For the induction step, we assume that
     $|\Hoc (\{ f \})| \ge 2$. We list
     all subsets of $\{1,\ldots, n\}$ as $(I_1, I_2, \ldots, I_{2^n})$
     in such a way that for all $i, j \in \{1,2,\ldots, n\}$, we have
     $I_i \subseteq I_j \Rightarrow i \le j$.
     Now
     \[
     f = \sum_{j = 1}^{2^n} H_{I_j} (f).
     \]
     Let $k$ be minimal with $H_{I_k} (f) \neq 0$.
     Then, for all $j$ with $j>k$, $I_j \not\subseteq I_k$,
     and hence there  is $m \in I_j$ such that $m \not\in I_k$.
     We produce $f'$ from $f$ by setting all variables whose indices
     are not in $I_k$ to $0$. Clearly $f' \in \{f\} L$.
     Since all summands of $f$  for $j > k$ become $0$ by
     this setting, we have
     $f' = H_{I_k} (f)$.
     By the induction hypothesis, every homogeneous component
     of $f - f' = \sum_{j = k+1}^{2^n} H_{I_j} (f)$ lies in
     $L ( \{f - f'\} L)$. Since $f' \in \{ f \} L$, we have
     $f - f' \in L (\{f\} L)$, and therefore
     $L ( \{f - f'\} L) \subseteq L ( (L (\{f\} L)) L) = L (\{f\} L)$ and so we obtain that
      $\{ H_{I_{k+1}}, \ldots, H_{I_{2^n}} \} \subseteq
     L (\{f\} L)$. \qed

     The following theorem will help us to represent term functions of the algebra
     $\ab{A}$ as sums of absorbing functions. Informally, the idea is the following:
     Suppose that we have a universal algebra $\ab{A} = \algop{A}{+,-,0, (f_i)_{i \in I}}$,
     and let $F := \{f_i \mid i \in I\}$. To simplify the discussion, we assume
     that all $f_i$ have positive arity.
     Every  term function of $\ab{A}$ can be represented
     as by a tree whose leaves are variables or $0$, and whose other nodes are elements
     of $F \cup \{+,-\}$. Our goal is to move $+$ and $-$ to the top of the tree.
     To this end, we transform the tree into a tree
     whose nodes are labelled by a new set of functions, $H$, and by $+$ and $-$.
     All functions in $H$ will be absorbing, and in the new tree,
     no node labelled by $+$ or $-$ will appear inside a subtree rooted by
     an element of $H$.
     Deviating from this explanation, we will not work with the operations of the algebra
     $\ab{A}$ directly, but rather with polynomials over a field whose universe
     is $A$. Given a set $F$ of polynomials,
     we will obtain a set $H$ of homovariate polynomials such that
     each polynomial in $\Clop (F \cup \{ x_1+x_2,-x_1,0 \})$ is
     a sum of compositions of polynomials in $H$; this set of
     sums of compositions is the just the product $L \, C$, where
     $C = \Clop (H)$.
             
       \begin{thm} \label{thm:LClo}
   Let $\ab{K}$ be a field, let $F \subseteq \KX$, $L := \Clop (\{x_1+x_2,-x_1, 0\})$,
   and let $n \in \N$ be such
   that the total degree of each $f \in F$ is at most $n$.
   Then there exists a set $H \subseteq \Kxn{n}$ of homovariate polynomials
   such that
   \begin{equation} \label{eq:lhfclaim}
   L \,  \Clop (H) = \Clop (F \cup \{ x_1+x_2,-x_1,0 \})
   \end{equation}
   and the
   total degree of each $h \in H$ is at most $n$.
 \end{thm}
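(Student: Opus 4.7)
The plan is to construct $H$ in two stages and then invoke Lemma~\ref{lem:clonegen}. Let $H_0$ be the set of all homovariate components $H_I(f)$ for $f \in F$ and $I \subseteq \N$, with variables relabeled so that each such polynomial lies in $K[x_1, \ldots, x_{|I|}]$; since $\deg f \leq n$ and a homovariate polynomial of degree $d$ uses at most $d$ variables (each variable has exponent at least $1$), each element of $H_0$ is homovariate of degree at most $n$ in $K[x_1, \ldots, x_n]$. Let $H_1$ be the set of homovariate components of polynomials of the form $h(l_1, \ldots, l_k)$ with $h \in H_0$ of arity $k$ and $l_1, \ldots, l_k \in L$; since substituting degree-$1$ polynomials preserves total degree, $H_1$ satisfies the same bounds. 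Set $H := H_0 \cup H_1$.

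For $L \Clop(H) \subseteq \Clop(F \cup L)$, Lemma~\ref{lem:hv}(2) gives $H_0 \subseteq L(FL) \subseteq \Clop(F \cup L)$, and hence $H_1 \subseteq \Clop(F \cup L)$ as well, so $\Clop(H) \subseteq \Clop(F \cup L)$ and the inclusion follows. For the reverse inclusion, put $M := L \Clop(H)$ and apply Lemma~\ref{lem:clonegen}: it suffices to check that all projections lie in $M$ (clear, as $\{x_i \mid i \in \N\} \subseteq L \subseteq M$) and that $(F \cup L) M \subseteq M$. For $L M \subseteq M$: any element has the form $l(l_1(c_{1, \cdot}), \ldots, l_k(c_{k, \cdot}))$ with $l, l_i \in L$ and $c_{i, s} \in \Clop(H)$, and because $l$ is a $\Z$-linear combination of its variables this reduces to a $\Z$-linear combination of the $c_{i, s}$, placing it in $L \Clop(H) = M$.

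The heart of the proof is $F M \subseteq M$. Given $f \in F$ and $m_i = l_i(c_{i, \cdot}) \in M$ (with $l_i \in L$ and $c_{i, s} \in \Clop(H)$), decompose $f = \sum_I h_I$ with $h_I = H_I(f) \in H_0$. Since $h_I$ depends only on the variables in $I$, one obtains $f(m_1, \ldots, m_k) = \sum_I p_I(\underline{c})$, where $p_I := h_I((l_i)_{i \in I})$ is the polynomial in the arguments of the $l_i$ obtained by substituting the linear polynomials into $h_I$, and $\underline{c}$ denotes the appropriate tuple of $c_{i, s}$'s. Each $p_I$ has total degree at most $\deg h_I \leq n$; decomposing $p_I = \sum_J p_{I, J}$ into homovariate components, by construction $p_{I, J} \in H_1 \subseteq \Clop(H)$. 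Since $\Clop(H)$ is closed under composition and each $c_{i, s} \in \Clop(H)$, each $p_{I, J}(\underline{c}) \in \Clop(H)$, and summing places $f(m_1, \ldots, m_k)$ in $L \Clop(H) = M$.

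The main technical obstacle is the insufficiency of a single stage: substituting $L$-polynomials into $h_I \in H_0$ produces, via multinomial expansion, cross-terms (for example, the summand $2 y_1 y_2 y_3$ in $(x_1 x_2^2)(y_1, y_2 + y_3) = y_1 y_2^2 + 2 y_1 y_2 y_3 + y_1 y_3^2$) whose homovariate components generally lie outside $\Clop(H_0)$. Adjoining these as $H_1$ suffices precisely because one extra layer lets the subsequent $\Clop(H)$-substitutions be absorbed inside $\Clop(H)$ via its closure under composition; no further iteration is needed.
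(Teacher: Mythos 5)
Your proof is correct, but it is organized genuinely differently from the paper's. The paper takes the single ``closed'' set $H := \Hoc\big((L(FL)) \cap \Kxn{n}\big)$ and then proves two separate identities: first $\Clop(H \cup \{x_1+x_2,-x_1,0\}) = L \, \Clop(H)$, by applying Lemma~\ref{lem:clonegen} to the generating set $H \cup \{x_1+x_2,-x_1,0\}$ --- so it must absorb substitutions of $L\,\Clop(H)$-elements into elements of $H$ itself, which is exactly why $H$ is chosen to contain \emph{all} homovariate components of $L(FL)$ in at most $n$ variables --- and then, separately, $\Clop(F \cup \{x_1+x_2,-x_1,0\}) = \Clop(H \cup \{x_1+x_2,-x_1,0\})$ (the paper's \eqref{eq:fh}). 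You instead apply Lemma~\ref{lem:clonegen} directly to $C = F \cup \{x_1+x_2,-x_1,0\}$ with $M = L\,\Clop(H)$, so the only compositions that need absorbing are substitutions into members of $F$; this is precisely why your explicit two-stage set $H = H_0 \cup H_1$, with $H_0$ the (relabeled) homovariate components of $F$ and $H_1$ the (relabeled) homovariate components of $H_0 L$, suffices and no further iteration is needed. The underlying mechanism is the same as the paper's handling of $e = g(\sum \pm x_j, \ldots)$: pull the additive part to the front by substituting linear forms in fresh variables and decomposing into homovariate components, using the total-degree bound $n$ to keep every nonzero component in at most $n$ variables. What your route buys is a smaller, fully explicit generating set $H$ and a shorter verification (the paper's intermediate equality \eqref{eq:fh} disappears, and the Associativity Lemma is needed only in the trivial form that a $\Z$-linear combination of $\Z$-linear combinations of $\Clop(H)$-elements is again one); what the paper's larger $H$ buys is the extra identity $\Clop(H \cup \{x_1+x_2,-x_1,0\}) = L\,\Clop(H)$ for its particular $H$, which your $H$ is not claimed to satisfy and which the theorem does not require. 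Two small presentational points: state explicitly that the elements of $H_1$ are also relabeled so that $H_1 \subseteq \Kxn{n}$, and note that these relabelings are harmless in the step ``$p_{I,J} \in H_1$, hence $p_{I,J}$ applied to the $c_{i,s}$ lies in $\Clop(H)$'', since renaming variables is just composition with variables, under which $\Clop(H)$ is closed.
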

 \emph{Proof:}
 In the case $F = \varnothing$, we choose $H := F$ obtain that both sides
 are equal to the subgroup of $\algop{\KX}{+,-,0}$ generated by $\{x_i \mid i \in \N\}$.
 Let us now assume $F \neq \varnothing$.
 We consider the subgroup
 \(
 S = \big( L(FL) \big) \cap \Kxn{n} 
 \)
 of
 $\algop{\Kxn{n}}{+,-,0}$.
 Let
 \[
     H := \Hoc (S) = \Hoc \Big( \big( L(FL) \big) \cap \Kxn{n} \Big). 
 \]
 Now each $h \in H$ is a homovariate component of a polynomial in $L (FL)$, and
 has therefore total degree at most~$n$. We now  start to establish~\eqref{eq:lhfclaim}. 
 By Lemma~\ref{lem:hv},
 we have $H = \Hoc (S) \subseteq L (SL)$. Since $S \subseteq L (FL)$,
 and since by Lemma~\ref{lem:ass}\eqref{it:a2}, $L(FL)$ is closed under composition with polynomials from $L$
 from both sides, we obtain $L (SL) \subseteq L(FL)$, and therefore
 $H \subseteq L (FL)$, and then also 
 \begin{equation} \label{eq:hllfl}
 HL \subseteq L (FL).
 \end{equation}
 We will now prove
 \begin{equation} \label{eq:gh}
 \Clop (H \cup \{ x_1+x_2, -x_1, 0 \})
 =
 L \, \Clop (H).
 \end{equation}
 $\supseteq$: Both sets $L$ and $\Clop (H)$ are subsets of
 $\Clop (H \cup \{x_1 + x_2 ,- x_1, 0 \})$. Since $\Clop (H)$ is a clone,
 their product $L \, \Clop (H)$ is also a subset of $\Clop (H)$.
 
 $\subseteq$: We use Lemma~\ref{lem:clonegen}
 with $C := H \cup \{ x_1 + x_2, - x_1, 0\}$ and $M = L \, \Clop(H)$, and observe
 that $\{x_i \mid i \in \N\} \subseteq M$. For proving
 $CM \subseteq M$, we observe that using the Associativity Lemma (Lemma~\ref{lem:ass}),
 we obtain
 $CM = HM \cup \{ x_1 + x_2, - x_1, 0\} M \subseteq
  HM \cup L (L \, \Clop (H)) = HM \cup L \, \Clop (H) = HM \cup M$.
 Hence what remains to prove is $HM \subseteq M$.
 To this end, we will show that
 for all  $t_1, \ldots, t_{n} \in L \, \Clop (H)$ and 
 for all  $g \in H$, we have
 \begin{equation} \label{eq:gti}
        g (t_1, \ldots, t_{n}) \in L \, \Clop (H).
 \end{equation}
 We fix $t_1, \ldots, t_{n} \in L\, \Clop (H)$ and $g \in H$. 
 Each $t_i$ is a sum of elements in $\Clop (H) \cup \{ -p \mid p \in \Clop (H) \}$.
 We collect these summands and thereby find $N \in \N_0$,
 $s_1, \ldots, s_N \in \Clop (H)$, $\sigma :\{1,2,\ldots, N\} \to \{0,1\}$, and
 $(m_i)_{i=1}^n$ with $0 = m_0 \le m_1 \le m_2 \le \dots \le m_n = N$ such that
 for each $i \in \{1,2,\ldots, n\}$, we have
 \[
 t_i = \sum_{j=m_{i-1} + 1}^{m_i} (-1)^{\sigma(j)} \, s_j.
   \]
   We define $e \in \Kxn{N}$ by
   \[
   e(x_1, \ldots, x_N) := g ( \sum_{j=1}^{m_1} (-1)^{\sigma(j)} \, x_j, \ldots,
   \sum_{j = m_{n-1} + 1}^{N}  (-1)^{\sigma(j)} \, x_j),
   \]
   which implies
   \[
   e(s_1, \ldots, s_N) = g (t_1, \ldots, t_n).
   \]
     Then $e \in H L$, and thus by~\eqref{eq:hllfl}, $e \in L (FL)$.     
  We decompose $e$ into its homovariate components and obtain
     \begin{equation} \label{eq:edec}
        e = \sum_{I \subseteq \{1, \ldots, N\}} H_I (e).
     \end{equation}
        Let $I \subseteq \{1, \ldots, N\}$.
        We first observe that $H_I (e) \in \Hoc (L (FL))$, which
        by Lemma~\ref{lem:hv} is a subset of $L \Big( \big( L(FL) \big) L \Big)$.
        Hence by Lemma~\ref{lem:ass}\eqref{it:a2}, we obtain
        \begin{equation} \label{eq:hielfl}
          H_I (e) \in L (FL).
        \end{equation}  
        We will now show that for each $I \subseteq \{1,2,\ldots, N\}$,
        we have
        \begin{equation} \label{eq:hie}
          H_I (e) \in \{0\} \cup \Clop (H) \subseteq L \, \Clop (H).
        \end{equation}
  We first consider the case $|I| > n$.        
  Since $e \in L(FL)$ is obtained by adding and substituting linear polynomials into polynomials
  from $F$, 
  $e$  has total degree at most $n$.
  Hence $H_I (e) = 0$. 
  In the case $|I| \le n$, we let $\pi :\{1,2,\ldots, N\} \to \{1,2,\ldots, N\}$ be
  a bijection such that $I \subseteq \pi [\{1,2,\ldots, n\}]$. Then clearly
  $\pi^{-1} [I] \subseteq \{1,2,\ldots, n\}$. 
       We define
     \begin{equation*} \label{eq:subst}
          p_I (x_1, \ldots, x_N) := H_I (e) \, ( x_{\pi^{-1} (1)}, x_{\pi^{-1} (2)}, \ldots, x_{\pi^{-1} (N)} ).
     \end{equation*}
     Then by~\eqref{eq:hielfl}, $p_I \in \big(L(FL)\big) L \subseteq L (FL)$. 
     Since $H_I (e) \, (x_1, \ldots, x_N)$  contains only variables $x_i$ with $i \in I$,
     $H_I (e)\,  (x_{\pi^{-1} (1)}, \ldots, x_{\pi^{-1} (N)})$ contains only
     $x_{\pi^{-1} (i)}$ with $i \in I$.  Thus $p_I \in K [ x_{\pi^{-1} (i)} | i \in I ]$,
     which implies $p_I \in \Kxn{n}$.
      Therefore,  $p_I$ is a homovariate polynomial in $(L (FL)) \cap \Kxn{n}$, and
     thus $p_I \in H$. 
     Now we compute
     \[
        \begin{split} 
          p_I (s_{\pi (1)}, \ldots, s_{\pi (N)}) &=
          H_I (e) \, ( s_{\pi (\pi^{-1} (1))}, \ldots, s_{\pi (\pi^{-1} (N))})  \\
          &=
          H_I (e) \, ( s_1, \ldots, s_N).
        \end{split}
      \]
      Since $p_I \in H$, we have $p_I (s_{\pi (1)}, \ldots, s_{\pi (N)}) \in \Clop (H)$.
      Therefore, $H_I (e) \, (s_1, \ldots, s_N) \in \Clop (H)$, which
      completes the proof of~\eqref{eq:hie}. 
      Using~\eqref{eq:edec}, we obtain
      that $e (s_1, \ldots, s_N) \in L \,\Clop (H)$. This completes the proof
      of~\eqref{eq:gti}. Now applying Lemma~\ref{lem:clonegen} we obtain the ``$\subseteq$''-inclusion
      of \eqref{eq:gh}. 

      We finish the proof by establishing that
      \begin{equation} \label{eq:fh}
      \Clop (F \cup \{x_1+x_2,-x_1,0\}) = \Clop (H \cup \{x_1+x_2,-x_1,0 \}).
      \end{equation}
      For $\subseteq$, we first observe that $F \subseteq L\, \Hoc (F)$.
      Each $g \in \Hoc(F)$ contains at most $n$ variables. Replacing
      these $n$ variables by $x_1, \ldots, x_n$ and undoing this replacement
      afterwards, we obtain
      \begin{equation} \label{eq:hfl}
      \Hoc (F) \subseteq  \big( (\Hoc(F) \, L) \cap \Kxn{n} \big) \, L.
      \end{equation}
      The next goal is to prove
      \begin{equation} \label{eq:hfl1}
        (\Hoc(F) \, L) \cap \Kxn{n}  \subseteq LH.
       \end{equation}  
      By Lemma~\ref{lem:hv},
      $\Hoc (F) \subseteq  L(FL)$, thus $\Hoc (F) \, L \subseteq L (FL)$.
      Therefore
      \[
      (\Hoc(F) \, L) \cap \Kxn{n}  \subseteq (L (FL)) \cap \Kxn{n}.
      \]
      Now by Lemma~\ref{lem:hv},
      \[
          (L (FL)) \cap \Kxn{n} \subseteq L \, \Hoc \big(L (FL) \cap \Kxn{n}\big) = LH,
          \]
    which
      completes the proof of~\eqref{eq:hfl1}.
      Combining~\eqref{eq:hfl} and~\eqref{eq:hfl1}, we get
      \[
      \begin{split}
        F &\subseteq L \, \Hoc (F) \\
        &\subseteq L \, \Big( \big((\Hoc (F) \, L) \cap \Kxn{n} \big) L \Big) \,\,(\text{by~\eqref{eq:hfl}}) \\
        &\subseteq L \, (
                   (LH)
          L  )  \,\, (\text{by~\eqref{eq:hfl1}}).
      \end{split}
      \]
      Since  both $L$ and $H$ are subsets of $\Clop(H \cup \{x_1 + x_2, -x_1, 0\})$,
      we obtain $F \subseteq \Clop(H \cup \{x_1 + x_2, -x_1, 0\})$. From this,
      the inclusion $\subseteq$ of \eqref{eq:fh} immediately follows.
      For the other inclusion in~\eqref{eq:fh}, we use~\eqref{eq:hllfl} to obtain
      $H \subseteq L(FL)$. Hence $H \subseteq  \Clop (F \cup \{x_1+x_2,-x_1,0\})$.
      This proves~\eqref{eq:fh}; together with~\eqref{eq:gh}, this establishes the claim
      in~\eqref{eq:lhfclaim}. \qed

      \section{Clones of finitary functions}
         We call a finite algebra $\ab{A} =
      \algop{A}{+,-,0, (f_i)_{i\in I}}$ an
      \emph{expanded elementary abelian group} if $\algop{A}{+,-,0}$ is
      a finite abelian group of prime exponent.
            We call $*$ a \emph{field multiplication} on $\ab{A}$
      if $\ab{K} := \algop{A}{+,-,0,*}$ is a field; $\ab{K}$ is
      then a \emph{field associated with $\ab{A}$}.
      We do not claim that such a multiplication has any further connection
      to the algebra $\ab{A}$.
      \begin{lem} \label{lem:abs1}
        Let $\ab{K}$ be a field,
        let $n \in \N$, and let $p \in \Kxn{n}$ be such that
        $p^{\ab{K},n}$ is an absorbing function from $K^n$ to $K$.
        Then $p^{\ab{K},n} = (H_{\{1,2,\ldots, n\}} (p)) ^{\ab{K},n}$.
      \end{lem}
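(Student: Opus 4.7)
\emph{Proof plan.} The plan is to decompose $p$ into its homovariate components and show that the absorbing hypothesis forces every component other than $H_{\{1,\ldots,n\}}(p)$ to induce the zero function on $K^n$. For each $S \subseteq \{1,\ldots,n\}$ and each $a = (a_1,\ldots,a_n) \in K^n$, I write $a^S$ for the tuple obtained from $a$ by replacing $a_j$ by $0$ for every $j \notin S$. Since $p$ uses only the variables $x_1,\ldots,x_n$, the decomposition $p = \sum_{I \subseteq \{1,\ldots,n\}} H_I(p)$ is a finite sum in $\Kxn{n}$.

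First I would establish the key evaluation identity
\[
   p^{\ab{K},n}(a^S) \;=\; \sum_{I \subseteq S} H_I(p)^{\ab{K},n}(a).
\]
This holds because every monomial of $H_I(p)$ contains each variable $x_i$ with $i \in I$ to a positive power: if $I \not\subseteq S$, then some $x_j$ with $j \in I \setminus S$ is replaced by $0$ in $a^S$, killing every monomial of $H_I(p)$; while if $I \subseteq S$, then $H_I(p)$ depends only on variables whose indices lie in $I \subseteq S$, which are unaffected by the substitution, so it contributes its ordinary value $H_I(p)^{\ab{K},n}(a)$.

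Next, whenever $S \subsetneq \{1,\ldots,n\}$ the tuple $a^S$ has at least one zero coordinate, and the absorbing hypothesis gives $p^{\ab{K},n}(a^S) = 0$. Applying M\"obius inversion over the Boolean lattice of subsets of $\{1,\ldots,n\}$ to the identity above yields
\[
   H_S(p)^{\ab{K},n}(a) \;=\; \sum_{I \subseteq S} (-1)^{|S|-|I|}\, p^{\ab{K},n}(a^I),
\]
and for $S \subsetneq \{1,\ldots,n\}$ every $I \subseteq S$ is itself a proper subset of $\{1,\ldots,n\}$, so each term on the right vanishes; hence $H_S(p)^{\ab{K},n}$ is the zero function for every proper subset $S$. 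Summing $p = \sum_{S \subseteq \{1,\ldots,n\}} H_S(p)$ as induced functions on $K^n$ then leaves only the top term, giving $p^{\ab{K},n} = H_{\{1,\ldots,n\}}(p)^{\ab{K},n}$. I expect the only mildly delicate point to be the bookkeeping in the evaluation identity --- verifying precisely when substituting zeros annihilates $H_I(p)$ --- after which the M\"obius step and the conclusion are immediate.
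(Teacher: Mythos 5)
Your proof is correct. It rests on the same basic mechanism as the paper's proof --- substituting $0$ for the variables outside a subset $S$ kills every homovariate component $H_I(p)$ with $I \not\subseteq S$ and leaves those with $I \subseteq S$ unchanged, while absorption makes any such evaluation vanish --- but you organize it differently. The paper argues by induction on the number of nonzero homovariate components: it picks a $\subseteq$-minimal nonzero component $H_I(p)$ with $I \neq \{1,\ldots,n\}$, uses a single zero-substitution to show $H_I(p)^{\ab{K},n} = 0$, subtracts it, and recurses. You instead prove the full evaluation identity $p^{\ab{K},n}(a^S) = \sum_{I \subseteq S} H_I(p)^{\ab{K},n}(a)$ and apply M\"obius inversion over the Boolean lattice to conclude in one shot that $H_S(p)^{\ab{K},n} = 0$ for \emph{every} proper $S \subsetneq \{1,\ldots,n\}$ (including $S = \emptyset$, which handles the constant term). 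Your route is slightly heavier on machinery but non-inductive, and it yields the pointwise vanishing of each proper component explicitly rather than only the equality of the induced functions; the inversion step is unproblematic in any characteristic, since its coefficients are just $\pm 1$. The paper's induction is a bit more elementary, needing only the minimal-component substitution. Either argument proves the lemma as stated.
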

      \emph{Proof:}
      We proceed by induction on the number $k :=
      \# \{ I \subseteq \{1,2,\ldots, n\} \mid H_I (p) \neq 0 \}$ of
      non-zero homovariate components of $p$.
      If $k = 0$, then $p = 0$ and $H_{\{1,2,\ldots, n\}} = 0$.
      If $k \ge 1$, we let $I$ be minimal with respect to $\subseteq$
      such that $H_I(p) \neq 0$. If $I = \{1,2,\ldots,n\}$, then
      $p = H_I (p)$. If $I \not= \{1,2,\ldots, n\}$, we write $p$
      as the sum of its homovariate components, which means
      \[
      p = \sum_{J \subseteq \{1,2,\ldots,n\}} H_J (p).
      \]
      We set all $x_i$ with
      $i \not\in I$ to $0$ and obtain
      $0 = H_I (p)^{\ab{K}, n}$. Therefore, $q := p - H_I (p)$ satisfies
      $p^{\ab{K},n} = q^{\ab{K},n}$. By the induction hypothesis $q^{\ab{K},n} =
      H_{\{1,2,\ldots, n\}} (q) ^{\ab{K},n}$.
      Now since $H_{\{1,2,\ldots, n\}} (q) = H_{\{1,2,\ldots, n\}} (p) $, we obtain
      $(H_{\{1,2,\ldots, n\}} (q))^{\ab{K},n} = (H_{\{1,2,\ldots, n\}} (p))^{\ab{K},n}$. \qed
       \begin{thm} \label{thm:boundabs}
        Let $\ab{A} =  \algop{A}{+,-,0, (f_i)_{i\in I}}$ an expanded elementary abelian
        group, and let $m, k \in \N$.
        We assume that for each $i \in I$, the
         arity of $f_i$ is at most $m$, and that
        $\ab{A}$ is nilpotent of class at most $k$.
         Then all absorbing polynomial functions of $\ab{A}$ are of
         essential arity at most $(m (|A| - 1))^{k-1}$.
      \end{thm}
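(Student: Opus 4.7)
The plan is to represent polynomial functions of $\ab{A}$ as polynomials over a finite field on $A$, apply Theorem~\ref{thm:LClo} to obtain homovariate ``building blocks'', and then use the lower central series of $\ab{A}$ to bound the composition depth of those representations. Since $\algop{A}{+,-,0}$ is an elementary abelian group of order $q$, I fix a field multiplication to obtain $\ab{K}$ with $|K|=q$ and represent the fundamental operations of $\ab{A}$ (together with the constants) as polynomials in $\KX$. After reducing modulo $x_i^q - x_i$, each such polynomial has total degree at most $n := m(q-1)$, so Theorem~\ref{thm:LClo} produces a set $H \subseteq \KX$ of homovariate polynomials of total degree at most $n$ with $\Clop(F \cup \{x_1+x_2,-x_1,0\}) = L \cdot \Clop(H)$, where $F$ is the resulting set of polynomials. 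Each non-constant $h \in H$ is absorbing (every monomial uses all of its variables) and has arity at most $n$.

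The core step is the following Key Lemma: for every $P \in \Clop(H)$ presented as a composition tree $T$, if every root-to-leaf path of $T$ contains at most $d$ internal nodes of arity $\ge 2$, then the image of $P$ as a function lies in the ideal $B_{d+1} := 0/\lambda_{d+1}$, where $\lambda_1,\lambda_2,\ldots$ is the lower central series of $\ab{A}$. I would prove this by induction on $T$. A leaf yields image $\subseteq A = B_1$. At a unary internal node, absorbing unary $h$ sends any ideal to itself, so the subtree image is preserved. At a node of arity $r \ge 2$, fixing any $r-2$ of the arguments reduces $h$ to a binary absorbing polynomial, and Lemma~\ref{lem:cideals} together with the inductive hypothesis gives $h(B_d,\ldots,B_d) \subseteq [B_d, B_d] \subseteq [A, B_d] = B_{d+1}$. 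Because $\ab{A}$ is $k$-nilpotent, $B_{k+1} = \{0\}$, so any tree with a path of more than $k-1$ branching nodes represents the zero function. Hence every non-zero $Q \in \Clop(H)$ admits a tree in which every path has at most $k-1$ branching nodes; contracting the unary chains yields a tree of depth at most $k-1$ with branching at most $n$, which has at most $n^{k-1}$ leaves, so the essential arity of $Q$ as a function is at most $n^{k-1}$.

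To conclude, take an absorbing polynomial function $p$ of $\ab{A}$ of essential arity $N' \ge 1$ and represent it by a polynomial $\tilde p = \sum_i a_i Q_i \in L \cdot \Clop(H)$ with $a_i \in \Z$ and $Q_i \in \Clop(H)$. Lemma~\ref{lem:abs1} gives $p = H_{\{1,\ldots,N'\}}(\tilde p)$ as functions, and homovariate decomposition is additive, so $p = \sum_i a_i H_{\{1,\ldots,N'\}}(Q_i)$ as functions. Since $p \ne 0$, some $H_{\{1,\ldots,N'\}}(Q_i)$ is a non-zero function; a non-zero homovariate polynomial with variable set $V$ essentially depends on all of $V$ after reduction modulo $x_j^q - x_j$, so the essential arity of $Q_i$ is at least $N'$. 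Combined with the previous paragraph, this gives $N' \le n^{k-1} = (m(q-1))^{k-1}$. The main technical obstacle will be the careful proof of the Key Lemma, in particular the $r$-ary extension of Lemma~\ref{lem:cideals} via partial application, the treatment of unary nodes (which only preserve the ideal level), and the bookkeeping that connects the polynomial-level notions (homovariate components, exponent reduction) with the function-level essential-arity bound we ultimately need.
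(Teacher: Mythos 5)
Your overall route is the paper's: represent the operations by field polynomials of total degree at most $n=m(q-1)$, invoke Theorem~\ref{thm:LClo} to get the homovariate set $H$ with $L\,\Clop(H)=\Clop(F\cup\{x_1+x_2,-x_1,0\})$, push compositions of elements of $H$ down the lower central series, and finish with Lemma~\ref{lem:abs1}. But your Key Lemma, which carries the whole argument, is false as stated, and the induction you sketch proves the false version rather than the one you later use. With the hypothesis ``every root-to-leaf path contains \emph{at most} $d$ internal nodes of arity $\ge 2$'' the conclusion ``image $\subseteq B_{d+1}$'' already fails for the tree consisting of a single leaf $x_1$ (every path has $0\le d$ branching nodes, but the image is $A$, not $B_{d+1}$); this is exactly where your induction has no valid base case. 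The statement you actually need (and tacitly use when you say ``any tree with a path of more than $k-1$ branching nodes represents the zero function'') is the reverse quantification: if \emph{some} root-to-leaf path contains at least $d$ nodes of arity $\ge 2$, then the image lies in $B_{d+1}$; equivalently, the image lies in $B_{\ell+1}$ where $\ell$ is the \emph{maximum} over paths of the number of branching nodes. For that version your inductive step is insufficient: at a branching node only one child subtree is guaranteed to be deep, the other arguments may be arbitrary (e.g.\ bare variables), so the step ``$h(B_d,\ldots,B_d)\subseteq[B_d,B_d]\subseteq[A,B_d]$'' does not apply. What is needed is the asymmetric step $h(A,\ldots,A,B_d,A,\ldots,A)\subseteq B_{d+1}$, i.e.\ one deep argument suffices; this is precisely the paper's inclusion~\eqref{eq:g2abs}, and it is obtainable by your own device (fix all arguments except the deep one and one other, getting a binary absorbing polynomial function, and apply Lemma~\ref{lem:cideals} together with $[\alpha,1_A]=[1_A,\alpha]$). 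With the corrected lemma your counting (every path short $\Rightarrow$ at most $n^{k-1}$ leaves after contracting unary chains) goes through and matches the paper's bound, where the same content appears as~\eqref{eq:pnl} (``at least $n^{l-1}+1$ variables forces image in $A_l$''), proved there by a minimal-counterexample argument instead of trees.

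Two smaller points in your final paragraph also need care, though they are fixable. First, an absorbing \emph{polynomial} function involves constants, which are not generated by $F\cup\{x_1+x_2,-x_1,0\}$ unless you really do adjoin all constants of $A$ to $F$ at the outset (which is legitimate and keeps total degrees $\le n$; the paper instead represents the underlying term function in extra variables and substitutes the constants only after splitting the sum, tracking which of $x_1,\ldots,x_N$ occur in each summand). Second, your inference ``some $H_{\{1,\ldots,N'\}}(Q_i)$ is a nonzero function, hence $Q_i$ essentially depends on at least $N'$ variables'' needs the observation that reduction modulo $x_j^q-x_j$ preserves the variable set of each monomial and therefore commutes with taking homovariate components: the $Q_i\in\Clop(H)$ are not reduced, and a nonzero homovariate polynomial can induce the zero function. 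The paper avoids this by arguing in the other direction only (no summand missing a variable can contribute a monomial in all of $x_1,\ldots,x_N$, so the top homovariate component vanishes and Lemma~\ref{lem:abs1} forces $q=0$), which you may find simpler to adopt.
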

       \begin{proof}
         If $|A| = 1$, then all polynomial functions are of essential
         arity~$0$, and hence the claim holds. We will now assume
         $|A| > 1$.
         We let $(\alpha_i)_{i \in \N_0}$ be the lower central series
         of $\ab{A}$ defined by $\alpha_0 := 1_A$ and $\alpha_i = [1_A, \alpha_{i-1}]$
         for $i \in \N$, and for $i \in \N_0$, we define $A_i := 0/\alpha_i$ to be the ideal
         of $\ab{A}$ associated with $\alpha_i$; hence $A_0 = A$.
         Then by $k$-nilpotency,
        $A_k = 0$.  
        Let $\ab{K}$ be a field associated with $\ab{A}$.
        For each $i \in I$, we let $m_i$ be the arity of $f_i$, and
        we choose
        ${f'_i} \in \Kxn{m}$ such that
        \[
        {(f'_i)}^{\ab{K},n} (a_1, \ldots, a_m) = f_i (a_1, \ldots, a_{m_i})
        \]
        for all $a_1, \ldots, a_m \in A$ and
        $\deg_{x_j} ({f'_i}) < |A|$ for all $j \in \{1,2,\ldots, m\}$.
     
        Then the total degree of ${f'_i}$ is at most $n := m (|A| - 1)$.
        Let
        \[
            F := \{ {f'_i} \mid i \in I \}.
        \]
            Then $F \subseteq \Kxn{m}$.
            We use Theorem~\ref{thm:LClo} to obtain
            a set $H \subseteq \Kxn{n}$ of homovariate polynomials
   such that
   \begin{equation} \label{eq:U}
   L \, \Clop (H) = \Clop (F \cup \{ x_1+x_2,-x_1,0 \})
   \end{equation}
   and the
   total degree of each $h \in H$ is at most $n$.
   
   We will show next that
   for all  $l \in \N$, for all $N \in \N$, and
   for all $p \in \Clop (H) \cap \Kxn{N}$,
   the following property holds:
   \begin{equation}  \label{eq:pnl}
     \text{if $p$ contains at least $n^{l-1} + 1$ variables,} \\
            \text{ then
              $p^{\ab{K},N} (A^N) \subseteq A_l$.}
   \end{equation}         
            Seeking a contradiction, we let $l \in \N$ be minimal
            such that there is an $N \in \N$ and a  $p \in \Clop (H) \cap \Kxn{N}$ that contains at least
            $n^{l-1} + 1$ variables and $p^{\ab{K},N} (A^N) \not\subseteq A_l$.
            Among those $p$, we choose one of minimal depth $\delta_H (p)$ (as defined
            before Lemma~\ref{lem:clonegen})
            with respect to $H$.
            Since $l \in \N$, $p$ contains at least two variables,
            and thus $p$ is not a variable and not a constant polynomial.
            Therefore, \[ p = h (t_1, \dots, t_n) \] with $h \in H$ nonconstant
            and $t_1, \ldots, t_n \in \Clop (H)$.
            
            In the case that $h$ contains only one variable, we let
            $x_j$ be this variable. The polynomial $t_j$ must then
            also contain at least $n^{l-1} + 1$ variables. By the
            minimality of $\delta_H (p)$, $t_j^{\ab{K},n} (A^n) \subseteq A_l$.
            Since $h$ is homovariate, the function
            $g_1: a_j \mapsto h^{\ab{K},n} (a_1, \ldots, a_n)$, which is
            formally defined
            by
            \[
                   g_1 = \{ (a_j, h^{\ab{K},n} (a_1, \ldots, a_n))
                            \mid
                            a_1, \ldots, a_n \in A \},
            \] satisfies $g_1 (0) = 0$. Since $h^{\ab{K},n}$ is a term operation of
            $\ab{A}$, we have $g_1 (A_l) \subseteq A_l$, and therefore
            $p^{\ab{K},N} (A^N) \subseteq A_l$, contradicting the choice of $p$.

            In the case that $h$ contains exactly $r$ variables
            with $2 \le r \le n$, we let $x_{j_1}, \ldots, x_{j_r}$
            be these variables, and define $g_2 : A^r \to A$ by
            \begin{equation*} %
              g_2: (a_{j_1}, a_{j_2}, \ldots, a_{j_r}) \mapsto
              h^{\ab{K},n} (a_1, \ldots, a_n)
             \end{equation*}
            We first show that for all $i_1, \ldots, i_r \in \N_0$,
            \begin{equation}  \label{eq:g2abs}
              g_2 (A_{i_1} \times \cdots \times A_{i_r}) \subseteq A_{\max (\{i_1, \ldots, i_r\}) +1}.
            \end{equation}  
            To this end, we fix $(a_1, \ldots, a_r) \in \prod_{s=1}^r A_{i_s}$.
            The function $g_2$ is a  term function of $\ab{A}$.
            Let $u$ be such that
            $i_u = \max (\{i_1, \ldots, i_r\})$,and let $v \in \{1,\ldots, r\} \setminus \{u\}$.
            We define
            \[g_3 (x,y) := g_2 (a_1, \ldots, a_{u-1}, x, a_{u+1}, \ldots, a_{v-1}, y, a_{v+1}, \ldots, a_r). \]
            Then $g_3$ is a polynomial function of $\ab{A}$. Since $h$ is homovariate,
            $g_3 (a,0) = g_3 (0, a) = 0$ for all $a \in A$. Denoting $0/[\alpha_{d}, \alpha_{e}]$ simply
            by $[A_d, A_e]$, Lemma~\ref{lem:cideals}
            implies $g_3(a_u, a_v) \in [A_{i_u}, A_{i_v}] \subseteq [A_{i_u}, A] \subseteq A_{i_u + 1}$.
            This completes the proof of~\eqref{eq:g2abs}.
            
            Continuing with the proof of~\eqref{eq:pnl},
            we first consider the case $l = 1$.
            Then by~\eqref{eq:g2abs}, $g_2 (A^r) = g_2 ({A_0}^r) \subseteq A_1$, and therefore
            $h^{\ab{K},n} (A^n) \subseteq A_1$. Hence $p^{\ab{K}, N} (A^N) \subseteq A_1$,
            contradicting the choice of $p$.
            
             In the case $l \ge 2$, one of the polynomials
              $t_{j_1}, \ldots, t_{j_r}$ contains at least
              $n^{l-2} + 1$ variables: if all contained at most
              $n^{l-2}$ variables, also $p = h (t_1, \ldots, t_n)$
              would contain at most $r n^{l-2} \le n^{l-1}$ variables,
              contradicting the choice of $p$.
              Let $s \in \N$ be such that $t_{j_s}$ contains at
              least $n^{l-2} + 1$ variables. By the minimality of $l$,
              we see that $t_{j_s} (A^n) \subseteq A_{l-1}$.
              By~\eqref{eq:g2abs},
                    $g_2 (A \times \cdots \times A \times  A_{l-1} \times A \times \cdots \times A) \subseteq
               A_l$, where $A_{l-1}$
               occurs at place $s$. 
              Thus $p^{\ab{K}, N}  (A^N) \subseteq A_l$,
              contradicting again the choice of $p$.
              This completes the proof of~\eqref{eq:pnl}.

              Setting $l := k$, we see that every $p \in \Clop (H)$ that contains at least $n^{k-1} + 1$
              variables induces the constant $0$ function on $\ab{K}$.

              We will now show that all absorbing polynomial functions
              of $\ab{A}$
              depend on at most $n^{k-1}$ variables.
              To this end, let $N > n^{k-1}$, and let $q$ be an $N$-ary absorbing polynomial function of
              $\ab{A}$.
              Then there is $M \in \N$ and there are
              $t \in \Clo_{M + N} (\ab{A})$ and $b_1, \ldots, b_M \in
              A$ such that
              \[
              q (a_1, \ldots, a_N) =  t (a_1, \ldots, a_N, b_1, \ldots, b_M)
              \]
              for all $a_1, \ldots, a_N \in A$.
              Since $t \in \Clo_{N+M} (\ab{A})$, there is
              a polynomial $p \in \Clop (F \cup \{ x_1 + x_2, - x_1, 0 \})
               \cap \Kxn{N+M}$ 
               such that $t = p^{\ab{K}, N+M}$. Then by~\eqref{eq:U}, $p \in L \, \Clop (H)$,
               and therefore, there is $l \in \N$ such that
               $p = \sum_{i=1}^l p_i$ with $p_i \in \Clop (H)$.
               We let
               \[
                   \begin{array}{rcl}
                       I  & := & \{ i \in \{1,\ldots, l \} : p_i \text{ contains all the variables }x_1, \ldots, x_N \}, \\
                       J  & := & \{ 1, \ldots, l \} \setminus I.
                   \end{array}
               \]
               For $i \in I$, $p_i$ contains at least $n^{k-1} + 1$ variables, and therefore
               $p_i$ induces the $0$-function on $\ab{A}$.
               Thus
               \[
               p^{\ab{K}, N+M} = \sum_{i \in J} p_i^{\ab{K}, N + M}.
               \]
               For $i \in J$, let
               \[
               r_i (x_1, \ldots, x_N)  := p_i (x_1, \ldots, x_N, b_1, \ldots, b_M) \in \Kxn{N}.
               \]
               Then we have
               \[
               q (a_1, \ldots, a_N) = \sum_{i \in J} r_i^{\ab{K},N} (a_1, \ldots, a_N)
               \]
               for all $a_1, \ldots, a_N \in A$. Since $q$ is absorbing, $\sum_{i \in J} r_i (x_1, \ldots, x_N)$ induces an
               absorbing function on $K$. By Lemma~\ref{lem:abs1}, $\sum_{i \in J} r_i (x_1, \ldots, x_N)$
               induces the same function as $H_{\{1, \ldots, N\}} (\sum_{i \in J} r_i (x_1, \ldots, x_N))$.
               For each $i \in J$, $p_i$ does not contain all the variables $x_1, \ldots, x_N$. Thus $r_i$ has no monomial
               that contains all the variables $x_1, \ldots, x_N$, and therefore the sum $\sum_{i \in J} r_i (x_1, \ldots, x_N)$
               does not contain such a monomial, either. Hence $H_{\{1, \ldots, N\}} (\sum_{i \in J} r_i (x_1, \ldots, x_N)) = 0$.
               Therefore $\sum_{i \in J} r_i  (x_1, \ldots, x_N)$ induces the $0$-function on $K$,
               which implies $q = 0$.
        \end{proof}
                   
      \section{Proofs of the main results} \label{sec:proofs}
      \emph{Proof of Theorem~\ref{thm:fund2}:}
         As a nilpotent algebra in a congruence modular variety,
      $\ab{A}$ has a Mal'cev term (see Theorem~6.2 of \cite{FM:CTFC} and
         the remarks after the proof of Corollary~7.2, cf. \cite[Theorem~2.7]{Ke:CMVW}).
      We choose an element $o \in A$ and let
      $L = \langle 0_A = \alpha_0, \alpha_1 , \cdots ,\alpha_h = 1_A \rangle$
      be a maximal chain in the congruence lattice of $\ab{A}$.
      By Lemma~\ref{lem:maxchain}, the abelian group associated with $\ab{A}$, $L$ and $o$ is elementary abelian,
      and therefore we can
      use Theorem~\ref{thm:expand} to expand $\ab{A} = \algop{A}{(f_i)_{i \in I}}$ with operations $+$ and $-$
      and thereby obtain an $h$-nilpotent expanded group
      $\ab{V} := \algop{A}{+,-,0, (f_i)_{i \in I}}$ with elementary abelian group reduct.
      Then by Theorem~\ref{thm:boundabs}, all nonzero absorbing polynomial functions of $\ab{V}$ are of arity
      at most $s = \big(m (q - 1)\big)^{h-1}$.
      Hence by Lemma~\ref{lem:snpeg}, $\ab{V}$ is $s$-supernilpotent, and then
      by Lemma~\ref{lem:reduct}, its reduct $\ab{A}$
      is also $s$-supernilpotent. The claim on the free spectrum now follows from Lemma~\ref{lem:snp}. \qed

      \emph{Proof of Corollary~\ref{cor:cmv}:}
      As a nilpotent algebra in a congruence modular variety,
      $\ab{A}$ has a Mal'cev term.
      We write $\ab{A} = \prod_{i=1}^n \ab{B}_i$ with
      each $\ab{B}_i$ of prime power order. 
      By Theorem~\ref{thm:fund2}, each $\ab{B}_i$ is
      $s_i$-supernilpotent with $s_i = (m (|B_i| - 1))^{h_i - 1}$,
      where $h_i$ is the height of the congruence lattice of $\ab{B}_i$.
      As a nilpotent algebra in a congruence modular variety, $\ab{B}_i$
      is congruence uniform \cite[Corollary~7.5]{FM:CTFC}, which implies $h_i \le \log_2 (|B_i|)$.
      Since $|B_i| \le |A|$, we have $s_i \le s$, and therefore
      each factor $\ab{B}_i$ is $s$-supernilpotent.
      Hence $\ab{A}$ is $s$-supernilpotent. The claim on the free spectrum again
      follows from Lemma~\ref{lem:snp}. \qed 

      \emph{Proof of Corollary~\ref{cor:dec}:}
      For proving~\eqref{it:dec1}, we assume that $\ab{A}$ has small
      free spectrum. Then from Lemma~\ref{lem:snp}\eqref{it:s4}$\Rightarrow$\eqref{it:s2}, we obtain that $\ab{A}$ is nilpotent.
      By \cite[Theorem~3.14]{Ke:CMVW}, $\ab{A}$ is isomorphic to
      a direct product of algebras of prime power order. Now Corollary~\ref{cor:cmv} yields that $\ab{A}$ is $\big((m (|A|-1))^{(\log_2 (|A|)-1)}\big)$-supernilpotent and that the free spectrum $f_{\ab{A}}$ is of the form
      $f_{\ab{A}} (n) = 2^{p(n)}$ with $\deg(p) \le (m (|A|-1))^{(\log_2 (|A|)-1)}$.
      For proving~\eqref{it:dec2}, we assume that
      $\ab{A}$ is supernilpotent. Then from Lemma~\ref{lem:snp}\eqref{it:s1}$\Rightarrow$\eqref{it:s4}, we obtain that $\ab{A}$ has small free spectrum.
      Now we proceed as in \eqref{it:dec1}. \qed
      
      \section{Acknowledgements}
      The author thanks Sebastian Kreinecker for
      numerous discussions on clones of polynomials and
      Neboj\v{s}a Mudrinski and Jakub Opr\v{s}al for discussions
      on Section~\ref{sec:prelsnp}.
      \def\cprime{$'$}
\providecommand{\bysame}{\leavevmode\hbox to3em{\hrulefill}\thinspace}
\providecommand{\MR}{\relax\ifhmode\unskip\space\fi MR }
\providecommand{\MRhref}[2]{%
  \href{http://www.ams.org/mathscinet-getitem?mr=#1}{#2}
}
\providecommand{\href}[2]{#2}


\begin{thebibliography}{{Wir}17}

\bibitem[AE06]{AE:EKCN}
E.~Aichinger and J.~Ecker, \emph{Every {$(k+1)$}-affine complete nilpotent
  group of class {$k$} is affine complete}, Internat. J. Algebra Comput.
  \textbf{16} (2006), no.~2, 259--274.

\bibitem[Aic06]{Ai:TPFO2}
E.~Aichinger, \emph{The polynomial functions of certain algebras that are
  simple modulo their center}, Contributions to general algebra. 17, Heyn,
  Klagenfurt, 2006, pp.~9--24.

\bibitem[Aic14]{Ai:OTDD}
\bysame, \emph{On the direct decomposition of nilpotent expanded groups}, Comm.
  Algebra \textbf{42} (2014), no.~6, 2651--2662.

\bibitem[AM07]{AM:PCOG}
E.~Aichinger and P.~Mayr, \emph{Polynomial clones on groups of order {$pq$}},
  Acta Math. Hungar. \textbf{114} (2007), no.~3, 267--285.

\bibitem[AM10]{AM:SAOH}
E.~Aichinger and N.~Mudrinski, \emph{Some applications of higher commutators in
  {M}al'cev algebras}, Algebra Universalis \textbf{63} (2010), no.~4, 367--403.

\bibitem[AM13]{AM:OVCO}
\bysame, \emph{On various concepts of nilpotence for expansions of groups},
  Publ. Math. Debrecen \textbf{83} (2013), no.~4, 583--604.

\bibitem[AMO18]{AMO:COTR}
E.~{Aichinger}, N.~{Mudrinski}, and J.~{Opr{\v s}al}, \emph{{Complexity of term
    representations of finitary functions}}, Internat. J. Algebra Comput.
    \textbf{28} (2018), 1101--1118.

\bibitem[BB87]{BB:FSON}
J.~Berman and W.~J. Blok, \emph{Free spectra of nilpotent varieties}, Algebra
  Universalis \textbf{24} (1987), no.~3, 279--282.

\bibitem[BS81]{BS:ACIU}
S.~Burris and H.~P. Sankappanavar, \emph{A course in universal algebra},
  Springer New York Heidelberg Berlin, 1981.

\bibitem[Bul01]{Bu:OTNO}
A.~Bulatov, \emph{On the number of finite {M}al'tsev algebras}, Contributions
  to general algebra, 13 (Velk{\'e} Karlovice, 1999/Dresden, 2000), Heyn,
  Klagenfurt, 2001, pp.~41--54.

\bibitem[CF09]{CF:FCAR}
M.~Couceiro and S.~Foldes, \emph{Function classes and relational constraints
  stable under compositions with clones}, Discuss. Math. Gen. Algebra Appl.
  \textbf{29} (2009), no.~2, 109--121.

\bibitem[FM87]{FM:CTFC}
R.~Freese and R.~N. McKenzie, \emph{Commutator theory for congruence modular
  varieties}, London Math. Soc. Lecture Note Ser., vol. 125, Cambridge
  University Press, 1987.

\bibitem[Fre83]{Fr:SIAI}
R.~Freese, \emph{Subdirectly irreducible algebras in modular varieties},
  Universal algebra and lattice theory ({P}uebla, 1982), Lecture Notes in
  Math., vol. 1004, Springer, Berlin, 1983, pp.~142--152.

\bibitem[Gum83]{Gu:GMIC}
H.~P. Gumm, \emph{Geometrical methods in congruence modular algebras}, vol.~45,
  Mem. Amer. Math. Soc., no. 286, American Mathematical Society, 1983.

\bibitem[Her79]{He:AAIC}
C.~Herrmann, \emph{Affine algebras in congruence modular varieties}, Acta Sci.
  Math. (Szeged) \textbf{41} (1979), no.~1-2, 119--125.

\bibitem[Hig67]{Hi:TOOR}
G.~Higman, \emph{The orders of relatively free groups}, Proc. {I}nternat.
  {C}onf. {T}heory of {G}roups ({C}anberra, 1965), Gordon and Breach, New York,
  1967, pp.~153--165.

\bibitem[HM88]{HM:TSOF}
D.~Hobby and R.~McKenzie, \emph{The structure of finite algebras}, Contemporary
  mathematics, vol.~76, American Mathematical Society, 1988.

\bibitem[Kea99]{Ke:CMVW}
K.~A. Kearnes, \emph{Congruence modular varieties with small free spectra},
  Algebra Universalis \textbf{42} (1999), no.~3, 165--181.

\bibitem[Kis92]{Ki:TROT}
E.~W. Kiss, \emph{Three remarks on the modular commutator}, Algebra Universalis
  \textbf{29} (1992), no.~4, 455--476.

\bibitem[Kre18]{Kr:CFSO}
S.~Kreinecker, \emph{Closed function sets on groups of prime order},
ArXiv e-prints (2018), 1810.09175.

\bibitem[MMT87]{MMT:ALVV}
R.~N. McKenzie, G.~F. McNulty, and W.~F. Taylor, \emph{Algebras, lattices,
  varieties, volume {I}}, Wadsworth \& Brooks/Cole Advanced Books \& Software,
  Monterey, California, 1987.

\bibitem[Moo18]{Mo:HCT}
A.~Moorhead, \emph{Higher commutator theory for congruence modular varieties},
J. Algebra \textbf{513} (2018), 133-158.

\bibitem[Smi76]{Sm:MV}
J.~D.~H. Smith, \emph{Mal'cev varieties}, Lecture {N}otes in Math., vol. 554,
  Springer Verlag Berlin, 1976.

\bibitem[VL83]{VL:NIPV}
M.~R. Vaughan-Lee, \emph{Nilpotence in permutable varieties}, Universal algebra
  and lattice theory ({P}uebla, 1982), Lecture Notes in Math., vol. 1004,
  Springer, Berlin, 1983, pp.~293--308.

\bibitem[{Wir}19]{Wi:OSA}
  A.~{Wires}, \emph{{On Supernilpotent Algebras}}, Algebra Universalis
  \textbf{80}:1 (2019).

\end{thebibliography}
    \end{document}